\DeclareFontShape{T1}{lmr}{bx}{sc} { <-> ssub * cmr/bx/sc }{}
\pgfplotsset{compat=newest}
\numberwithin{equation}{section}
\setlist[enumerate]{label=(\roman*)}
\theoremstyle{plain}
\newtheorem{theorem}{Theorem}[section]
\newtheorem{remark}[theorem]{Remark}
\newtheorem{definition}[theorem]{Definition}
\newtheorem{example}[theorem]{Example}
\newcommand{\R}{\mathbb{R}}
\newcommand{\C}{\mathbb{C}}
\DeclareMathOperator{\rank}{rank}
\DeclareMathOperator{\Skew}{skew}
\DeclareMathOperator{\Sym}{sym}
\DeclareMathOperator{\Ran}{Ran}
\newcommand{\Real}{\mathsf{Re}}
\newcommand{\dd}{\, \text{d}}
\newcommand{\Xmax}{X_{\mathrm{max}}}
\newcommand{\Xmin}{X_{\mathrm{min}}}
\newcommand{\lineWidth}{1.2}
\definecolor{mycolor1}{rgb}{0.00000,0.44700,0.74100}
\definecolor{mycolor2}{rgb}{0.85000,0.32500,0.09800}
\definecolor{mycolor3}{rgb}{0.92900,0.69400,0.12500}
\definecolor{mycolor4}{rgb}{0.46600,0.67400,0.18800}
\definecolor{mycolor5}{rgb}{0.49400,0.18400,0.55600}
\newcommand{\matlab}{MATLAB\textsuperscript{\textregistered}}
\title{Passivity preserving model reduction via spectral factorization}
\author{Tobias Breiten${}^\dagger$ \and Benjamin Unger${}^\star$}
\address{${}^{\dagger}$  Institute of Mathematics MA\,{}4-4, Technical University Berlin, Stra\ss e des 17.~Juni 136, 10623 Berlin, Germany}
\email{breiten@math.tu-berlin.de}
\address{${}^{\star}$ Stuttgart Center for Simulation Science (SC SimTech), University of Stuttgart, Universit\"{a}tsstr.~32, 70569 Stuttgart, Germany}
\email{benjamin.unger@simtech.uni-stuttgart.de}
\date{\today}
\keywords{port-Hamiltonian systems; structure-preserving model-order reduction; passivity; spectral factorization; $\mathcal{H}_2$-optimal}
\begin{document}

\begin{abstract}
	We present a novel model-order reduction (MOR) method for linear time-invariant systems that preserves passivity and is thus suited for structure-preserving MOR for port-Hamiltonian (pH) systems. Our algorithm exploits the well-known spectral factorization of the Popov function by a solution of the Kalman-Yakubovich-Popov (KYP) inequality. It performs MOR directly on the spectral factor inheriting the original system's sparsity enabling MOR in a large-scale context. Our analysis reveals that the spectral factorization corresponding to the minimal solution of an associated algebraic Riccati equation is preferable from a model reduction perspective and benefits pH-preserving MOR methods such as a modified version of the iterative rational Krylov algorithm (IRKA). Numerical examples demonstrate that our approach can produce high-fidelity reduced-order models close to (unstructured) $\mathcal{H}_2$-optimal reduced-order models.
\end{abstract}

\maketitle
{\footnotesize \textsc{Keywords:} passivity, port-Hamiltonian systems, structure-preserving model-order reduction, spectral factorization, $\mathcal{H}_2$-optimal}

{\footnotesize \textsc{AMS subject classification:} 30E05,37M99,65P99,93A30,93A15, 93B99}
%
%
\section{Introduction}
We study structure-preserving model-order reduction methods for linear time-invariant (LTI) systems in standard state-space form presented as 
\begin{equation}
	\label{eqn:FOM}
	\Sigma = \left\{~\begin{aligned}
		\dot{x}(t) &= Ax(t) + Bu(t),\qquad x(0) = 0,\\
		y(t) &= Cx(t) + Du(t),
	\end{aligned}\right.
\end{equation}
where $u\colon\mathbb{R}\to\mathbb{R}^m$, $x\colon\mathbb{R}\to\mathbb{R}^n$, $y\colon\mathbb{R}\to\mathbb{R}^m$ are the \emph{input}, \emph{state}, and \emph{output} of the system. For convenience, we use the short notations $\Sigma = (A,B,C,D)$ and $G(s) = C(sI_n-A)^{-1}B + D$ to refer to the system~\eqref{eqn:FOM}. The matrix-valued function $G\colon\mathbb{C}\setminus\sigma(A)$ defined as
\begin{equation}
	G(s) \vcentcolon= C(sI_n-A)^{-1}B + D
\end{equation}  
is called the \emph{transfer function} of~\eqref{eqn:FOM}, which can be obtained by applying the Laplace transformation to~\eqref{eqn:FOM} and solving for the Laplace transformed state variable. Hereby, $\sigma(A)$ denotes the spectrum of~$A$, i.e., $\sigma(A) = \{s\in\mathbb{C} \mid \rank(sI_n-A)<n\}$.

The structure that we are interested in is that of \emph{passivity},  which implies that the system under investigation has a \emph{port-Hamiltonian} (pH) representation \cite{BeaMV19}. One of the many advantages of pH systems is that this model paradigm offers a systematic approach for the interaction of (physical) systems with each other and the environment via interconnection structure. Besides, the inherent structure of pH systems is amendable to structure-preserving approximation \cite{Egg19}. Consequently, the pH paradigm is particularly useful in future high-tech initiatives in systems engineering, such as a digital twin, where models are coupled across different scales and physical systems. For further details on pH systems we refer to \cite{DuiMSB09,JacZ12,VanJ14}, and \cite{BeaMXZ18}.

For many practically relevant examples, the dimension $n$ of \eqref{eqn:FOM} is too large to ensure an efficient simulation, control, or analysis of the system. This is all the more the case if the system results from a spatial semi discretization of a partial differential equation. The research field of \emph{model-order reduction (MOR)} aims to construct low-dimensional surrogate models that faithfully retain the original dynamics. In the particular case \eqref{eqn:FOM}, the precise goal would consist in finding a model, called reduced-order model (ROM), of the form
\begin{equation}
	\label{eqn:ROM}
	\tilde{\Sigma} = \left\{~\begin{aligned}
		\dot{\tilde{x}}(t) &= \tilde{A}\tilde{x}(t) + \tilde{B}u(t),\qquad \tilde{x}(0) = 0,\\
		\tilde{y}(t) &= \tilde{C}\tilde{x}(t) + \tilde{D}u(t),
	\end{aligned}\right.
\end{equation}
where $\tilde{x}\colon\mathbb{R}\to\mathbb{R}^r$ and $\tilde{y}\colon\mathbb{R}\to\mathbb{R}^m$ are the state and output of the \emph{reduced} system. The design of \eqref{eqn:ROM} typically comes with the concurrent goals of finding $r\ll n$ while at the same time guaranteeing that $y \approx \tilde{y}$. The latter approximation typically being formalized by choosing a specific system norm such as the $\mathcal{H}_2$-norm or the $\mathcal{H}_\infty$-norm, respectively. 

While a pH representation of a system is advantageous for coupling and interconnection of individual systems, it also comes with the additional challenge of preserving the structure within the reduction step. In particular, a classical \emph{Petrov-Galerkin} projection framework will generally destroy the pH structure. In view of this fact, several modifications of existing reduction techniques have been proposed. Model reduction for pH systems is considered from a balanced truncation point of view in, e.g., \cite{Pol10,PolR12,Wuetal18,BreMS20}. Interpolatory model reduction for pH systems is studied in \cite{PolV10,Woletal10,PolV11,GugPBS12,IonA13,Eggetal18}. Model reduction based on Riemannian and direct parameter optimization has been discussed in \cite{Sat18,MosL20,SchV20}. Since a state-space realization may not be available, recent works also focus on data-driven approaches that only rely on accessible (frequency domain) quantities \cite{BenGV20}. 

Additionally, there exists a rather extensive literature on passivity and positive realness preserving model reduction methods. Let us exemplarily mention generalized balancing based techniques \cite{DesP84,HarJS84,GugA04,ReiS10,GuiO13} as well as interpolatory methods relying on spectral zeros \cite{Ant05a,Sor05,MayA07}. Interestingly enough, while the aforementioned link between pH systems and passivity is well-known from a control-theoretic point of view, it appears that it has not been explicitly exploited in the model reduction context. One of our contributions is to partially close this gap by presenting a novel method with a clear system-theoretic understanding that can produce accurate low-dimensional surrogates, and that can compete with brute-force optimization of the system parameters \cite{SchV20}. Our main results are:
\begin{enumerate}
	\item  \Cref{thm:H2errorSpectralFactor}, which is based on a well-known spectral factorization of the Popov function, establishes a connection between the classical $\mathcal{H}_2$ model reduction error and the $\mathcal{H}_2$ reduction error of the associated spectral factors. 
	\item \Cref{alg:MORspectralFactor}, where we introduce a novel passivity preserving model reduction technique that enforces the ROM to satisfy a positive real Lur'e equation. Our technique builds upon a ROM for the spectral factor, which can be constructed with a MOR method of choice, provided that the ROM for the spectral factor is asymptotically stable.  In combination with \Cref{thm:H2errorSpectralFactor} this motivates to use IRKA~\cite{BeaG17} to construct ROM for the spectral factor with minimal $\mathcal{H}_2$ error. 
	\item Since \Cref{alg:MORspectralFactor} depends on a (particular) solution to the KYP inequality, in \Cref{thm:HankelSingularValuesSpectralFactor} we show that the minimal solution produces spectral factors with smallest Hankel singular values, deepening similar observations from \cite{BreMS20}.  This not only provides theoretical insight into our method, but also has an impact on other pH-preserving MOR methods such as pH-IRKA \cite{GugPBS12}, as is demonstrated in the numerical examples.
\end{enumerate}
    The precise structure is now as follows. In the subsequent section, we recall several classical results about passivity, positive realness, and port-Hamiltonian systems. Moreover, we state several model reduction methods for structured and unstructured systems relevant to the novel approach that we introduce and analyze in \cref{sec:MORspectralFactorization}.  \Cref{sec:examples} provides a detailed numerical study of our new method and compares its performance with other state-of-the-art reduction techniques. For this purpose, we show results for a mass-springer-damper system from \cite{BeaG17} and for a recently suggested pH formulation modeling poroelasticity \cite{AltMU20c}.
    
\paragraph*{Notation.} By $\mathbb{R}$ and $\mathbb{C}$ we denote the set of real and complex numbers.  Furthermore, we use the symbols
	\begin{gather*}
		\overline{\mathbb{R}}_{+} \vcentcolon= \{x\in \mathbb{R} \mid x\geq 0\},\quad
		\mathbb{C}_+ \vcentcolon= \{z\in \mathbb{C} \mid \Real(z) > 0\},\quad 
		\mathbb{C}_- \vcentcolon= \{z\in \mathbb{C} \mid \Real(z) < 0\},
	\end{gather*}	
	to denote the non-negative real numbers, and the open right and left half complex plane. 
For a matrix $A\in \mathbb R^{n\times n}$ its transpose, symmetric, and unsymmetric part is given by $A^\top$, $\Sym(A)$ and $\Skew(A)$, respectively.  The identity matrix of dimension $n$ is denoted by $I_n$. For symmetric matrices $A,B\in \mathbb R^{n\times n}$, we use $A\ge B$ if $A-B$ is positive semidefinite. For a matrix $V\in \mathbb R^{n\times m}$, we denote its associated column space by $\Ran(V)$. The Frobenius norm of a matrix $A\in \mathbb R^{m\times n}$ is denoted by $\|A\|_{\mathrm{F}}$. For a dynamical system and its transfer function $G$, let us recall the classical spaces
\begin{align*}
	\mathcal{H}_2(\mathbb{C}_+) &\vcentcolon= \left\{ G\colon\mathbb{C}_+ \to\mathbb{C}^{m\times m} ~\left|~
	G \text{ is analytic and } \|G\|_{\mathcal{H}_2(\mathbb{C}_+)} < \infty
	\right.\right\},\\
	\mathcal{H}_\infty(\mathbb{C}_+) &\vcentcolon= \left\{ G\colon\mathbb{C}_+ \to\mathbb{C}^{m\times m} ~\left|~
		G \text{ is analytic and }\|G\|_{\mathcal{H}_\infty(\mathbb{C}_+)} < \infty
	\right.\right\},
\end{align*}
with
\begin{align*}
	\|G\|_{\mathcal{H}_2(\mathbb{C}_+)} &\vcentcolon= \left(\sup_{\sigma >0 } 
\int_{-\infty}^{\infty} \| G(\sigma + \imath \omega ) \| _F ^2 \ \dd
\omega \right)^{\frac{1}{2}},\quad 
	\|G\| _{\mathcal{H}_\infty(\mathbb{C}_+)} \vcentcolon=\displaystyle \sup_{z \in \mathbb C^+} \| G (z) \| _2.
\end{align*}
Similarly, we will consider $\mathcal{H}_2(\mathbb C_-),\mathcal{H}_\infty(\mathbb C_-), \mathcal{L}_2(i\mathbb R)$ and $\mathcal{L}_\infty(i\mathbb R)$.

\section{Preliminaries}

\subsection{Passive, positive real, and port-Hamiltonian systems}
In this section we recall dissipation theory for dynamical systems, which is used later on in the construction of the ROM. Throughout the text we assume that~\eqref{eqn:FOM} is \emph{minimal}, i.e.,  for all $s\in\C$ the conditions
\begin{align*}
	\rank\begin{bmatrix}
		sI_n-A, & B
	\end{bmatrix} = n =
	\rank\begin{bmatrix}
		sI_n-A^\top, & C^\top
	\end{bmatrix}
\end{align*}
are satisfied. The \emph{Popov function} for~\eqref{eqn:FOM} is defined as
\begin{equation}
	\label{eqn:popovFunction}
	\begin{gathered}
	\Phi\colon\C\setminus \big(\sigma(A)\cup \sigma(-A)\big) \to\C^{m\times m},\quad 
	s\mapsto G(s) + G(-s)^\top.
	\end{gathered}
\end{equation}

\begin{definition}
	\label{def:posRealPassPH}
	We consider system~\eqref{eqn:FOM}.
	\begin{enumerate}
		\item System~\eqref{eqn:FOM} is called \emph{positive real}, if the Popov function~\eqref{eqn:popovFunction} is positive semidefinite on the imaginary axis, i.e.
			\begin{equation}
				\label{eqn:positiveReal}
				\Phi(\imath\omega)\geq 0 \qquad\text{for all $\omega\in\R$}.
			\end{equation}
			It is called \emph{strictly positive real} if the inequality in~\eqref{eqn:positiveReal} is strict.
		\item System~\eqref{eqn:FOM} is called \emph{passive}, if there exists a state-dependent \emph{storage function} $\mathcal{H}\colon \R^n\to \overline{\mathbb{R}}_+$ satisfying for any $t_1\geq t_0$ the dissipation inequality
			\begin{equation}
				\label{eqn:dissipationInequality}
				\mathcal{H}(x(t_1)) - \mathcal{H}(x(t_0)) \leq \int_{t_0}^{t_1} y(\tau)^\top u(\tau)\mathrm{d}\tau
			\end{equation}
			for arbitrary trajectories $u,x,y$ satisfying~\eqref{eqn:FOM}.
		\item System~\eqref{eqn:FOM} is called \emph{port-Hamiltonian} (pH), if there exist a symmetric positive definite matrix $Q = Q^\top \in \R^{n\times n}$ and matrix decompositions $A = (J-R)Q$, $B = G-P$, $C = (G+P)^\top Q$, $D = S+N$, satisfying
			\begin{equation}
				\label{eqn:pHMatrixProperties}
				\begin{bmatrix}
					-J & -G\\
					G^\top & N
				\end{bmatrix}^\top = -\begin{bmatrix}
					-J & -G\\
					G^\top & N
				\end{bmatrix} \qquad\text{and}\qquad
				\begin{bmatrix}
					R & P\\
					P^\top & S
				\end{bmatrix}^\top = \begin{bmatrix}
					R & P\\
					P^\top & S
				\end{bmatrix} \geq 0.
			\end{equation}			
			In this case, we call
			\begin{equation}
				\label{eqn:pH}
				\left\{~\begin{aligned}
		\dot{x}(t) &= (J-R)Qx(t) + (G-P)u(t),\\
		y(t) &= (G+P)^\top Qx(t) + (S+N)u(t)
	\end{aligned}\right.
			\end{equation}
			a \emph{pH representation} of~\eqref{eqn:FOM}.
	\end{enumerate}
\end{definition}

\begin{remark}
	\label{rem:pHdescriptor}
	In practice, a pH representation is often directly available after modeling, albeit sometimes in generalized state-space form (also referred to as co-energy representation) 
	\begin{equation}
		\label{eqn:pHdescriptor}
				\left\{~\begin{aligned}
		E\dot{x}(t) &= (J-R)x(t) + (G-P)u(t),\\
		y(t) &= (G+P)^\top x(t) + (S+N)u(t)
	\end{aligned}\right.
	\end{equation}
	with symmetric positive definite $E$. The remaining matrices have to satisfy the same properties as in the standard state-space case given in~\eqref{eqn:pHMatrixProperties}.   Although this representation is known to be favorable for numerical approximation \cite{Egg19}, see also \cite{CarML20}, and easily extendable to descriptor systems \cite{BeaMXZ18}, we work with the representation~\eqref{eqn:pH} and consider extensions to~\eqref{eqn:pHdescriptor} in future work.
\end{remark}

With the matrix function $\mathcal{W}\colon \R^{n\times n} \to \R^{(n+m)\times (n+m)}$ defined via
\begin{equation}
	\label{eqn:KYPmatrix}
	\mathcal{W}(X) \vcentcolon= \begin{bmatrix}
		-A^\top X - XA & C^\top - XB\\
		C - B^\top X & D + D^\top
	\end{bmatrix},
\end{equation}
the Popov function can be factorized as
\begin{equation}
	\label{eqn:PopovFactorization}
	\Phi(s) = \begin{bmatrix}
		(-sI_n-A)^{-1}B\\
		I_m
	\end{bmatrix}^\top \mathcal{W}(X) \begin{bmatrix}
		(sI_n-A)^{-1}B\\
		I_m
	\end{bmatrix}.
\end{equation}

With these preparations, we have the following equivalence, see for instance~\cite{BeaMXZ18}.

\begin{theorem}
	\label{thm:equivalence}
	Assume that~\eqref{eqn:FOM} is minimal and stable. Then the following are equivalent.
	\begin{enumerate}
		\item The system~\eqref{eqn:FOM} is positive real.
		\item The system~\eqref{eqn:FOM} is passive.
		\item The system~\eqref{eqn:FOM} is port-Hamiltonian.
		\item There exists a symmetric positive definite matrix $X\in \R^{n\times n}$ satisfying the KYP inequality
			\begin{equation}
				\label{eqn:KYPinequality}
				\mathcal{W}(X) \geq 0.
			\end{equation}
	\end{enumerate}
\end{theorem}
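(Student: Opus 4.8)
The plan is to establish the four statements as equivalent by closing the cycle (i) $\Rightarrow$ (iv) $\Rightarrow$ (iii) $\Rightarrow$ (ii) $\Rightarrow$ (i). Three of the links are short algebra or a frequency-domain computation, and the real content sits in (i) $\Rightarrow$ (iv), the positive real (KYP) lemma. For \emph{(iv) $\Rightarrow$ (iii)} I would, given symmetric positive definite $X$ with $\mathcal{W}(X)\ge 0$, set $Q:=X$ and read off the pH data by matching $A=(J-R)Q$, $B=G-P$, $C=(G+P)^\top Q$, $D=S+N$, i.e.\ $J=\Skew(AX^{-1})$, $R=-\Sym(AX^{-1})$, $G=\tfrac12(B+X^{-1}C^\top)$, $P=\tfrac12(X^{-1}C^\top-B)$, $S=\Sym(D)$, $N=\Skew(D)$. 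By construction $J,N$ are skew and $S$ is symmetric, so it remains to certify positivity of the dissipation matrix; this follows because conjugating $\mathcal{W}(X)$ by $\operatorname{diag}(X^{-1},I_m)$ produces exactly $2\left[\begin{smallmatrix} R & P \\ P^\top & S\end{smallmatrix}\right]$, and congruence preserves positive semidefiniteness, so \eqref{eqn:pHMatrixProperties} holds.

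For \emph{(iii) $\Rightarrow$ (ii)} I would take the storage function $\mathcal{H}(x):=\tfrac12 x^\top Q x\ge 0$; differentiating along trajectories of \eqref{eqn:pH} and using $J^\top=-J$, $N^\top=-N$ to discard the skew parts yields
\[
y^\top u-\tfrac{\mathrm d}{\mathrm dt}\mathcal{H}(x)=\begin{bmatrix} Qx \\ u\end{bmatrix}^\top\begin{bmatrix} R & P \\ P^\top & S\end{bmatrix}\begin{bmatrix} Qx \\ u\end{bmatrix}\ge 0,
\]
and integrating over $[t_0,t_1]$ gives \eqref{eqn:dissipationInequality}.

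For \emph{(ii) $\Rightarrow$ (i)} I would probe the passive system with sinusoids. Because $A$ is stable, for fixed $\omega$ and $u_0\in\C^m$ the input $u(t)=\Real(\mathrm e^{\imath\omega t}u_0)$ admits a unique $T$-periodic steady-state trajectory ($T=2\pi/\omega$) with output $y(t)=\Real(\mathrm e^{\imath\omega t}G(\imath\omega)u_0)$. Over one period the storage increment vanishes, so \eqref{eqn:dissipationInequality} reduces to $\int_0^T y^\top u\,\mathrm dt\ge 0$; the period average evaluates to $\tfrac14\,u_0^{*}\Phi(\imath\omega)u_0\ge 0$, where $\Phi(\imath\omega)=G(\imath\omega)+G(\imath\omega)^{*}$ since $G(-\imath\omega)=\overline{G(\imath\omega)}$ for the real realization. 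As $u_0$ is arbitrary, \eqref{eqn:positiveReal} follows.

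The main obstacle is \emph{(i) $\Rightarrow$ (iv)}. I would first note that \eqref{eqn:PopovFactorization} is an algebraic identity valid for \emph{every} symmetric $X$ (the $X$-dependent terms telescope away), so on its own it delivers only the cheap converse $\mathcal{W}(X)\ge 0\Rightarrow\Phi(\imath\omega)\ge 0$ and no feasible $X$. To produce one, I would invoke spectral factorization of the Popov function: since (i) makes $\Phi$ rational, para-Hermitian, analytic on $\imath\R$ (as $A$ is stable) and positive semidefinite there, it admits a factorization $\Phi(s)=M(-s)^\top M(s)$ with a realization $M(s)=\widehat{C}(sI_n-A)^{-1}B+\widehat{D}$ sharing $(A,B)$. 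Matching this against \eqref{eqn:PopovFactorization} yields the positive real Lur'e equations
\[
-A^\top X-XA=\widehat{C}^\top\widehat{C},\qquad C^\top-XB=\widehat{C}^\top\widehat{D},\qquad D+D^\top=\widehat{D}^\top\widehat{D},
\]
for a symmetric $X$, so that $\mathcal{W}(X)=\left[\begin{smallmatrix}\widehat{C}^\top\\ \widehat{D}^\top\end{smallmatrix}\right]\left[\begin{smallmatrix}\widehat{C}&\widehat{D}\end{smallmatrix}\right]\ge 0$ is automatically a Gram matrix; stability and the first equation represent $X=\int_0^\infty\mathrm e^{A^\top t}\widehat{C}^\top\widehat{C}\,\mathrm e^{At}\,\mathrm dt\ge 0$, and minimality upgrades this to $X>0$. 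Concretely, one realizes the factorization through the Schur complement of the $(2,2)$-block in \eqref{eqn:KYPmatrix}: for $D+D^\top>0$ one seeks the stabilizing solution of $-A^\top X-XA-(C^\top-XB)(D+D^\top)^{-1}(C-B^\top X)=0$, whose existence hinges on the associated Hamiltonian matrix having no imaginary eigenvalues — exactly what (strict) positive realness guarantees — while the non-strict case is recovered by perturbing $D\rightsquigarrow D+\varepsilon I_m$ and passing to the limit $\varepsilon\downarrow 0$. Establishing this factorization/solvability, including the possible singularity of $D+D^\top$, is the genuine difficulty; everything else is linear-algebra bookkeeping, and for a fully detailed treatment we refer to \cite{BeaMXZ18}.
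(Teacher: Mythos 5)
The paper does not actually prove \Cref{thm:equivalence}: it records the statement as a classical result and defers entirely to \cite{BeaMXZ18}. Your proposal therefore does strictly more than the paper, and what you prove is correct. The cycle (i)$\Rightarrow$(iv)$\Rightarrow$(iii)$\Rightarrow$(ii)$\Rightarrow$(i) is the standard route; your (iv)$\Rightarrow$(iii) construction is verbatim the one the paper itself records in \Cref{rem:pHrepresentation} (and your congruence argument with $\operatorname{diag}(X^{-1},I_m)$ is the reverse of the computation of $\mathcal{W}(Q)$ in sub\cref{subsec:KYP}); your (iii)$\Rightarrow$(ii) energy-balance computation is correct; and your observation that \eqref{eqn:PopovFactorization} holds for \emph{every} symmetric $X$, so that it only yields the cheap implication (iv)$\Rightarrow$(i), is exactly the right way to see where the difficulty sits. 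For the hard direction (i)$\Rightarrow$(iv) you sketch the spectral-factorization/ARE argument (stabilizing solution of \eqref{eqn:ARE} when $D+D^\top>0$, $\varepsilon$-perturbation otherwise) and defer the details to \cite{BeaMXZ18}; this is the same level of rigor as the paper, and your perturbation outline matches the paper's own discussion of the singular case via \cite{Wil72b}. Two minor points you should tighten. First, in (ii)$\Rightarrow$(i) the paper's definition of passivity quantifies over trajectories of \eqref{eqn:FOM}, which has $x(0)=0$ built in, so the periodic steady-state trajectory is not literally admissible; the fix is routine (start from $x(0)=0$, let the state converge to the periodic orbit, average $\int_0^{NT}y^\top u$ over $N$ periods and let $N\to\infty$), but as written your one-period argument uses a dissipation inequality at a nonzero initial state. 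Second, "minimality upgrades $X\geq 0$ to $X>0$" needs the second Lur'e equation, not just the first: from $-A^\top X-XA=\widehat{C}^\top\widehat{C}$ one gets that $\ker X$ is $A$-invariant and annihilated by $\widehat{C}$, and then $C-B^\top X=\widehat{D}^\top\widehat{C}$ gives $\ker X\subseteq\ker C$, contradicting observability of $(A,C)$ unless $\ker X=\{0\}$; a one-line remark to this effect would close that step.
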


\begin{remark}
	\label{rem:pHrepresentation}
	For a passive system, we immediately observe that 
	whenever we have a positive definite solution $X = X^\top > 0$ of the KYP inequality~\eqref{eqn:KYPinequality}, then a port-Hamiltonian representation may be obtained via \begin{gather*}
		Q \vcentcolon= X, \qquad  J \vcentcolon= \tfrac{1}{2}\left(AX^{-1} - X^{-1}A^\top\right),  \qquad R \vcentcolon= -\tfrac{1}{2}\left(AX^{-1} + X^{-1}A^\top\right),\\
		G \vcentcolon= \tfrac{1}{2}(X^{-1}C^\top + B), \qquad P \vcentcolon= \tfrac{1}{2}(X^{-1}C^\top - B), \qquad S\vcentcolon= \Sym(D), \qquad N\vcentcolon= \Skew(D).
	\end{gather*}
	For the details we refer to \cite{BeaMV19}. Let us emphasize that different decompositions do not correspond to different state-space realizations as one would obtain by a change of coordinates. Consequently, this allows to keep the matrices $A = (J-R)Q$, $B = (G-P)$, $C = (G+P)^\top Q$, and $D = S+N$ unchanged while changing the system Hamiltonian from $Q$ to $X$. Later on, this property will be utilized to construct a system Hamiltonian which is particularly well suited for model reduction purposes.
\end{remark}

\subsection{Solutions of the KYP inequality}
\label{subsec:KYP}
Since our MOR algorithm relies on a solution of the KYP inequality~\eqref{eqn:KYPinequality}, we will briefly discuss related theoretical results and numerical methods. If the system $(A,B,C,D)$ is pH (cf.~Definition~\ref{def:posRealPassPH}), then we immediately observe that
\begin{align*}
	\mathcal{W}(Q) &= \begin{bmatrix}
		-\left((J-R)Q\right)^\top Q - Q(J-R)Q & Q(G+P) - Q(G-P)\\
		(G+P)^\top Q - (G-P)^\top Q & S+N + S^\top +  N^\top
	\end{bmatrix}\\
	&= \begin{bmatrix}
		-Q(J-R)^\top Q - Q(J-R)Q & 2QP\\
		2P^\top Q & 2S
	\end{bmatrix} = \begin{bmatrix}
		2QRQ & 2QP\\
		2P^\top Q & 2S
	\end{bmatrix}\\
	&= 2\begin{bmatrix}
		Q & 0\\
		0 & I_m
	\end{bmatrix}\begin{bmatrix}
		R & P\\
		P^\top & S
	\end{bmatrix}\begin{bmatrix}
		Q & 0\\
		0 & I_m
	\end{bmatrix} \geq 0,
\end{align*}
i.e., $Q$ solves the KYP inequality~\eqref{eqn:KYPinequality}. However, in general we cannot expect that $\mathcal{W}(Q)$ is of minimal rank amongst all solutions of the KYP inequality~\eqref{eqn:KYPinequality}. If we are interested in solutions $X$ such that $\mathcal{W}(X)$ is of minimal rank, then we can consider the closely related \emph{Lur'e equations}
\begin{subequations}
	\label{eqn:LureEquations}
	\begin{align}
		-A^\top X - XA &= L^\top L,\\
		XB - C^\top &= L^\top M,\\
		D + D^\top &= M^\top M,
	\end{align}
\end{subequations}
which have to be solved for the triple $(X, L, M)\in\R^{n\times n}\times \R^{k\times n} \times \R^{k\times m}$ with symmetric positive definite $X$ and $p = \rank\left[\begin{smallmatrix}
	L & M
\end{smallmatrix}\right]$ as small as possible. Clearly, for any solution $X$ of the KYP-inequality, there exists $L$ and $M$ (not necessarily of minimal rank) satisfying the Lur'e equations~\eqref{eqn:LureEquations}, and, vice versa, if $(X,L,M)$ solves the Lur'e equations~\eqref{eqn:LureEquations}, then $X$ is a solution of the KYP inequality~\eqref{eqn:KYPinequality}. In this case, we have the Cholesky-like factorization
\begin{equation}
	\label{eqn:KYPchol}
	\mathcal{W}(X) = \begin{bmatrix}
		L^\top\\M^\top
	\end{bmatrix}\begin{bmatrix}
		L & M
	\end{bmatrix} = \begin{bmatrix}
		L^\top L & L^\top M\\
		M^\top L & M^\top M
	\end{bmatrix}.
\end{equation}
For a general analysis of Lur'e equations and their relation to even matrix pencils we refer to~\cite{Rei11} and the references therein.

\subsubsection{The regular case} 

If $D+D^\top$ is nonsingular, then one can eliminate the unknowns $L$ and $M$, and use the Schur complement to derive the algebraic Riccati equation~(ARE)
\begin{equation}
	\label{eqn:ARE}
	-A^\top X - XA - (C^\top -XB)(D + D^\top)^{-1}(C-B^\top X) = 0.
\end{equation}
Indeed, note that we may set $L=(D+D^\top)^{-\frac{1}{2}}(C-B^\top X), M=(D+D^\top)^{\frac{1}{2}} $ and obtain $L\in \mathbb R^{m\times n}$, where $m$ is the number of inputs/outputs. 

It is well-known, see for instance \cite{Wil71}, that the symmetric solutions of~\eqref{eqn:ARE} are bounded, i.e., there exist symmetric matrices $\Xmin$ and $\Xmax$ solving~\eqref{eqn:ARE} such that for any symmetric solution $X$ of the ARE~\eqref{eqn:ARE}, the inequalities $\Xmin \leq X \leq \Xmax$ are satisfied. If we additionally assume (as we will do throughout the manuscript) that the system $\Sigma = (A,B,C,D)$ is minimal and passive, then these solutions are positive definite, i.e.,
\begin{displaymath}
	0 < \Xmin \leq X \leq \Xmax.
\end{displaymath}

\subsubsection{The singular case}

In case that $D+D^\top$ is singular, one can replace $D$ with the perturbation $D_{\varepsilon} \vcentcolon= D+\tfrac{\varepsilon}{2}I_m$ for some $\varepsilon > 0$ and solve the perturbed ARE
\begin{equation*}
	-A^\top X_{\varepsilon} - X_{\varepsilon} A - (C^\top -XB)(D + D^T + \varepsilon I_m)^{-1}(C-B^\top C) = 0.
\end{equation*}
Indeed, the minimal and maximal solutions $X_{\varepsilon,\min}$ and $X_{\varepsilon,\max}$ converge for $\varepsilon\to 0$, and the limiting matrices satisfy the Lur'e equations and thus the KYP inequality. For details, we refer to~\cite[Thm.~2]{Wil72b}. From a computational perspective, one may argue, cf.~\cite{PolR12}, that the numerical sensitivity of the Riccati equation~\eqref{eqn:ARE} increases with $\varepsilon$ tending to zero and that no convergence rates and no estimates for the error $\|X-X_{\varepsilon}\|$ are available. Instead, one may, for instance, solve the Lur'e equation by deflating its singular part~\cite{PolR12} or use an ADI iteration \cite{MasOR17}.  For an overview of existing methods to compute solutions of the Lur'e equations, we refer to~\cite{PolR12} and \cite{MasOR17} and the references therein. To the best of our knowledge, the existing methods first compute an (approximate) solution $X$ and then, in a post-processing step, compute the factors $L$ and $M$. A method suitable for the large-scale context that solely focuses on the computation of $L$ and $M$ seems not available. 

\subsection{Positive real balanced truncation}
The task of standard projection-based model reduction methods is to determine matrices $V,W\in\R^{n\times r}$ with $W^\top V = I_r$. The ROM~\eqref{eqn:ROM} associated with these matrices is given by
\begin{align*}
	\tilde{A} &\vcentcolon= W^\top AV, &
	\tilde{B} &\vcentcolon= W^\top B, &
	\tilde{C} &\vcentcolon= CV, &
	\tilde{D} &\vcentcolon= D
\end{align*}
with transfer function $\tilde{G}(s) = \tilde{C}(sI_r-\tilde{A})^{-1}\tilde{B}+\tilde{D}$.  The choice of the matrices $V,W$ used for the Petrov-Galerkin projection determines the approximation quality and additional properties of the ROM. Positive real balanced truncation, see \cite{DesP84,GugA04,HarJS84} for details, determines $V$ and $W$ by first, balancing the minimal solutions of the Lur'e equation \eqref{eqn:LureEquations} and its dual version 
\begin{subequations}
	\label{eqn:LureEquations_dual}
	\begin{align}
		-A Y - YA^\top &= \widehat{L} \widehat{L}^\top,\\
		YC^\top - B &= \widehat{L} \widehat{M}^\top,\\
		D + D^\top &= \widehat{M}\widehat{M}^\top .
	\end{align}
\end{subequations}
Note that if $(X,L,M)$ with $X>0$ solve \eqref{eqn:LureEquations}, then $(X^{-1},X^{-1}L^\top,M^\top)$ solve \eqref{eqn:LureEquations_dual} and vice versa. For strictly positive real systems it holds that $D+D^\top>0$ and instead of \eqref{eqn:LureEquations} and \eqref{eqn:LureEquations_dual} one may solve the positive real algebraic Riccati equations (AREs)
\begin{subequations}
	\label{eqn:positiveRealARE}
	\begin{align}
		A^\top X + XA + (C^\top-XB)(D+D^\top)^{-1}(C-B^\top X) &= 0, \\
		A Y + Y^\top A + (B-YC^\top)(D+D^\top)^{-1} (B^\top -CY) &= 0.
	\end{align}
\end{subequations}
A reduced system is subsequently obtained by truncation w.r.t.~the balanced coordinates. For strictly positive real systems with $X=Y=\mathrm{diag}(\sigma_1,\dots,\sigma_n)$ with $\sigma_i>\sigma_{i+1}$ for $i=1,\dots,n-1,$ this procedure is known to preserve asymptotic stability, passivity, and minimality, see \cite{HarJS84}. Additionally, different (relative) $\mathcal{H}_\infty$ error bounds have been derived, see, e.g., \cite{GugA04} for further details. Let us emphasize that in \cite{GuiO13} an error bound with respect to the \emph{gap metric} has been proven. Besides being structurally similar to the classical a priori $\mathcal{H}_\infty$ error bound, this additionally provides an interpretation of the closed loop behavior, which is independent of any input-output decomposition. The details for computing a positive real balanced reduced system are given in \Cref{alg:prbt}.
\begin{algorithm}[ht]
	\caption{Positive real balanced truncation}
	\label{alg:prbt}
	\begin{algorithmic}[1]
		\Statex \textbf{Input:} passive system $(A,B,C,D)$, reduced order  $r\in\mathbb{N}$
		\Statex \textbf{Output:} reduced passive system $(\tilde{A},\tilde{B},\tilde{C},\tilde{D})$
		\State Compute the minimal solutions
			\begin{displaymath}
				X_{\min}=L_X^\top L_X,\qquad Y_{\min}=L_Y^\top L_Y
			\end{displaymath}
			to the positive real Lur'e equations~\eqref{eqn:LureEquations} and~\eqref{eqn:LureEquations_dual}, respectively.
		\State Compute the singular value decomposition
			\begin{displaymath}
				\begin{bmatrix} U_1 & U_2 \end{bmatrix}\begin{bmatrix}\Sigma_1& 0 \\ 0 & \Sigma_2 \end{bmatrix} \begin{bmatrix}Z_1^\top \\ Z_2^\top \end{bmatrix}= L_Y L_X^\top.
				\end{displaymath}
		\State Define $V\vcentcolon=L_Y^\top U_1 \Sigma_1^{-\frac{1}{2}}$ and $W\vcentcolon=L_X^\top Z_1 \Sigma_1^{-\frac{1}{2}}$.
		\State Set $\tilde{A}\vcentcolon=W^\top AV, \tilde{B}\vcentcolon=W^\top B, \tilde{C}\vcentcolon=CV$, and $\tilde{D}\vcentcolon=D$.
	\end{algorithmic}
\end{algorithm}

\subsection{Structure-preserving model reduction via interpolation}
Interpolatory model reduction is a well-known technique, see~\cite{BeaG17} or \cite{AntBG20}, that constructs ROMs whose transfer function interpolates the transfer function of the original model at selected interpolation points.  In a projection framework, this can be achieved as follows;  \cite[Thm.~7.1]{BeaG17}.

\begin{theorem}[{Rational interpolation}]
	\label{thm:rationalInterp}
	\ \\
	 Consider the dynamical system~\eqref{eqn:FOM} with transfer function $G(s)$ and the ROM~\eqref{eqn:ROM} with transfer function $\tilde{G}(s)$ constructed via projection with the matrices $W,V\in\R^{n\times r}$. For interpolation points $\lambda,\mu\in\C$ assume that $\lambda I_n-A$ and $\mu I_n -A$ are nonsingular, i.e., $\lambda,\mu\not\in\sigma(A)$.  Let $\mathsf{r}\in\R^m$and $\ell\in\R^m$. 
	\begin{enumerate}
		\item\label{thm:rationalInterp:right} If $(\lambda I_n -A)^{-1}B\mathsf{r} \in \Ran(V)$, then $G(\lambda)\mathsf{r} = \tilde{G}(\lambda)\mathsf{r}$.
		\item\label{thm:rationalInterp:left} If $(\ell^\top C(\mu I_n-A)^{-1})^\top \in \Ran(W)$, then $\ell^\top G(\mu) = \ell^\top \tilde{G}(\mu)$.
		\item If the conditions in~(\ref{thm:rationalInterp:right}) and~(\ref{thm:rationalInterp:left}) are simultaneously satisfied with $\lambda = \mu$, then $\ell^\top G'(\lambda)\mathsf{r} = \ell^\top \tilde{G}'(\lambda)\mathsf{r}$.
	\end{enumerate}
\end{theorem}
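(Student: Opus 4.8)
The plan is to work with the oblique projector $\Pi \vcentcolon= VW^\top$, which satisfies $\Pi^2=\Pi$ because $W^\top V = I_r$ and which projects onto $\Ran(V)$ along $\ker(W^\top)$; all three claims amount to tracking how this projector acts on the pertinent resolvent vectors. For part~(i) I would set $v \vcentcolon= (\lambda I_n-A)^{-1}B\mathsf{r}$, so that $(\lambda I_n-A)v = B\mathsf{r}$, and note that the hypothesis says exactly $v\in\Ran(V)$, hence $v = V\hat v$ for a unique $\hat v\in\R^r$. The key step is to multiply $(\lambda I_n-A)v = B\mathsf{r}$ from the left by $W^\top$: using $W^\top V = I_r$ together with $\tilde A = W^\top AV$ and $\tilde B = W^\top B$, the left-hand side collapses to $(\lambda I_r-\tilde A)\hat v$, giving $(\lambda I_r-\tilde A)\hat v = \tilde B\mathsf{r}$. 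Provided $\lambda\notin\sigma(\tilde A)$, so that $\tilde G(\lambda)$ is defined, this yields $\hat v = (\lambda I_r-\tilde A)^{-1}\tilde B\mathsf{r}$, whence $\tilde C(\lambda I_r-\tilde A)^{-1}\tilde B\mathsf{r} = \tilde C\hat v = CV\hat v = Cv$; adding $\tilde D\mathsf{r}=D\mathsf{r}$ gives $\tilde G(\lambda)\mathsf{r}=G(\lambda)\mathsf{r}$.

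Part~(ii) is the dual statement, and I would prove it by the transposed version of the same manipulation. Writing $w \vcentcolon= (\mu I_n-A^\top)^{-1}C^\top\ell$, the hypothesis reads $w\in\Ran(W)$, so $w = W\hat w$; multiplying $(\mu I_n-A^\top)w = C^\top\ell$ from the left by $V^\top$ and invoking $V^\top W = I_r$, $\tilde A^\top = V^\top A^\top W$, and $\tilde C^\top = V^\top C^\top$ produces $(\mu I_r-\tilde A^\top)\hat w = \tilde C^\top\ell$. Transposing and reinserting then gives $\ell^\top\tilde G(\mu) = w^\top B + \ell^\top D = \ell^\top G(\mu)$. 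Equivalently, one may simply apply part~(i) to the transposed realization $(A^\top,C^\top,B^\top,D^\top)$ projected with the roles of $V$ and $W$ interchanged.

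For part~(iii) I would set $\lambda=\mu$ and recycle the two previous computations: from part~(i) we have $v = V\hat v$ with $\hat v = (\lambda I_r-\tilde A)^{-1}\tilde B\mathsf{r}$, and from part~(ii) we have $w = W\hat w$ with $\hat w = (\lambda I_r-\tilde A^\top)^{-1}\tilde C^\top\ell$. Differentiating the resolvent yields $G'(s) = -C(sI_n-A)^{-2}B$, and the analogous formula for $\tilde G$, so that $\ell^\top G'(\lambda)\mathsf{r} = -w^\top v$ and $\ell^\top\tilde G'(\lambda)\mathsf{r} = -\hat w^\top\hat v$; the identity $W^\top V = I_r$ then closes the gap, since $w^\top v = \hat w^\top W^\top V\hat v = \hat w^\top\hat v$. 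The argument is essentially bookkeeping rather than a genuine obstacle: the only points that demand care are the consistent placement of transposes in part~(ii), the well-definedness of the reduced resolvent (one should either assume $\lambda,\mu\notin\sigma(\tilde A)$ or deduce nonsingularity of $\lambda I_r-\tilde A$ from that of $\lambda I_n-A$), and the decision in part~(iii) to read off the derivative from the factored quantity $w^\top v$ rather than by differentiating the projected system directly, which is precisely what makes the cancellation via $W^\top V = I_r$ transparent.
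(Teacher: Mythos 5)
Your proof is correct and follows the standard direct projection argument: note that the paper itself states this theorem without proof, citing \cite[Thm.~7.1]{BeaG17}, and your computation (left-multiplying the resolvent identities $(\lambda I_n-A)v = B\mathsf{r}$ and $(\mu I_n-A^\top)w = C^\top\ell$ by $W^\top$ and $V^\top$, respectively, and cancelling via $W^\top V = I_r$, then obtaining the Hermite condition from $w^\top v = \hat{w}^\top W^\top V \hat{v} = \hat{w}^\top\hat{v}$) is essentially the argument given in that reference. One small caveat: your parenthetical suggestion that nonsingularity of $\lambda I_r - \tilde{A}$ might be \emph{deduced} from that of $\lambda I_n - A$ is not viable in general, since a Petrov--Galerkin projection can introduce reduced poles anywhere in $\C$; it must simply be assumed, as the theorem implicitly does by writing $\tilde{G}(\lambda)$ --- which is exactly how your proof proceeds, so this does not affect its validity.
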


While \Cref{thm:rationalInterp} details the construction of a ROM for given interpolation points $\lambda, \mu$ and tangent directions $\mathsf{r},\ell$, it does not provide a strategy to choose these quantities leading to a high-fidelity ROM. In the context of $\mathcal{H}_2$ optimal reduced-order models, the following theorem~\cite[Thm.~7.7]{BeaG17} provides an implicit definition.

\begin{theorem}[{$\mathcal{H}_2$-optimality conditions}]
	\label{thm:H2optimal}
	\ \\
	Let~\eqref{eqn:ROM} with semi-simple eigenvalues $\lambda_1,\ldots,\lambda_r$ of\ $\tilde{A}$ and transfer function $\tilde{G}(s) = \sum_{i=1}^r \tfrac{\ell_i \mathsf{r}_i^\top}{s-\lambda_i}$ be the best-approximation of~\eqref{eqn:FOM} with transfer function $G(s)$ of dimension $r$ with respect to the $\mathcal{H}_2$-norm.Then,
	\begin{subequations}
		\label{eqn:H2optimalityConditions}
	\begin{align}
		G(-\lambda_k)\mathsf{r}_k &= \tilde{G}(-\lambda_k)\mathsf{r}_k,\\
		\ell_k^\top G(-\lambda_k) &= \ell_k^\top \tilde{G}(-\lambda_k), \qquad\text{and} \label{h2_left_oc}\\
		\ell_k^\top G'(-\lambda_k)\mathsf{r}_k &= \ell_k^\top \tilde{G}'(-\lambda_k)\mathsf{r}_k
	\end{align} 
	\end{subequations}
	for $k=1,\ldots,r$.
\end{theorem}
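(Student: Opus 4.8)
The plan is to establish the $\mathcal{H}_2$-optimality conditions~\eqref{eqn:H2optimalityConditions} by the standard variational argument, viewing the squared $\mathcal{H}_2$-error as a smooth function of the reduced transfer function's residues $\ell_i\mathsf{r}_i^\top$ and poles $\lambda_i$ and setting its first-order variations to zero. Writing $\tilde{G}(s)=\sum_{i=1}^r \frac{\ell_i\mathsf{r}_i^\top}{s-\lambda_i}$ in pole-residue form, I would express the error functional
\begin{equation*}
	\mathcal{J}(\tilde{G}) \vcentcolon= \|G-\tilde{G}\|_{\mathcal{H}_2(\mathbb{C}_+)}^2
	= \|G\|_{\mathcal{H}_2}^2 - 2\langle G,\tilde{G}\rangle_{\mathcal{H}_2} + \|\tilde{G}\|_{\mathcal{H}_2}^2,
\end{equation*}
and then use the fact that, under the assumption of semi-simple eigenvalues, the inner products appearing here admit closed-form evaluations in terms of the data. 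The key computational tool is that for a stable transfer function $F$ and a simple-pole term $\frac{\ell\mathsf{r}^\top}{s-\lambda}$ with $\lambda\in\mathbb{C}_-$, the $\mathcal{H}_2$ inner product evaluates via a residue/Cauchy-integral computation to a quantity built from $F(-\lambda)$ contracted against $\ell$ and $\mathsf{r}$; this is what converts the optimality conditions into pointwise interpolation statements at the mirrored points $-\lambda_k$.

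First I would record the explicit formulas for the cross term $\langle G,\tilde{G}\rangle_{\mathcal{H}_2}$ and the self term $\|\tilde{G}\|_{\mathcal{H}_2}^2$ as sums over the reduced poles, obtained by closing the Fourier-integral contour in the appropriate half-plane and collecting residues. Concretely, one finds $\langle G,\tilde{G}\rangle = \sum_k \ell_k^\top G(-\lambda_k)\mathsf{r}_k$ (up to conjugation bookkeeping in the complex case) and a matching bilinear expression for the self term involving $\tilde{G}(-\lambda_k)$. Next I would differentiate $\mathcal{J}$ with respect to the free parameters. Varying the residue direction $\ell_k$ (and symmetrically $\mathsf{r}_k$) produces the first-order stationarity conditions that, after cancellation, read $G(-\lambda_k)\mathsf{r}_k=\tilde{G}(-\lambda_k)\mathsf{r}_k$ and $\ell_k^\top G(-\lambda_k)=\ell_k^\top\tilde{G}(-\lambda_k)$; these are the tangential interpolation conditions of \Cref{thm:rationalInterp}\eqref{thm:rationalInterp:right} and~\eqref{thm:rationalInterp:left}.

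Finally, varying the pole location $\lambda_k$ brings in the derivative of the simple-pole term, $\frac{\partial}{\partial\lambda_k}\frac{\ell_k\mathsf{r}_k^\top}{s-\lambda_k}=\frac{\ell_k\mathsf{r}_k^\top}{(s-\lambda_k)^2}$, whose $\mathcal{H}_2$ pairing against $G-\tilde{G}$ evaluates (again by residues) to $\ell_k^\top\big(G'(-\lambda_k)-\tilde{G}'(-\lambda_k)\big)\mathsf{r}_k$. Setting this to zero yields the bitangential Hermite condition $\ell_k^\top G'(-\lambda_k)\mathsf{r}_k=\ell_k^\top\tilde{G}'(-\lambda_k)\mathsf{r}_k$, completing~\eqref{eqn:H2optimalityConditions}. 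The main obstacle I anticipate is the careful bookkeeping in the residue evaluations of the $\mathcal{H}_2$ inner products: one must handle the pairing of the original system's transfer function $G$ against the reduced simple poles, keep track of complex conjugation and the reflection $\lambda_k\mapsto-\lambda_k$ induced by the definition of the $\mathcal{H}_2$ inner product, and justify that differentiation under the integral/residue sum is legitimate. The semi-simplicity hypothesis is exactly what guarantees that all poles are simple so that the residue calculus and the parametrization by $(\ell_k,\mathsf{r}_k,\lambda_k)$ are valid; without it one would face higher-order poles and a more delicate Jordan-structure argument.
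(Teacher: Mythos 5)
The paper does not actually prove \Cref{thm:H2optimal}: it states the result as known and cites \cite[Thm.~7.7]{BeaG17}, with the scalar ancestor attributed to \cite{MeiL67}. Your variational argument --- parametrizing $\tilde{G}$ by poles and rank-one residues (automatic for an order-$r$ system with $r$ simple poles), evaluating the $\mathcal{H}_2$ inner products by residue calculus so that $\langle G,\tfrac{\ell_k\mathsf{r}_k^\top}{\cdot-\lambda_k}\rangle=\ell_k^\top G(-\lambda_k)\mathsf{r}_k$, and setting to zero the first-order variations in $\ell_k$, $\mathsf{r}_k$, and $\lambda_k$ to obtain the right-tangential, left-tangential, and bitangential Hermite conditions respectively --- is precisely the classical proof given in that cited literature, so your proposal is correct and follows essentially the same route as the paper's source.
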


\Cref{thm:H2optimal} generalized results known for the scalar case \cite{MeiL67} and motivated the construction of an algorithm that iteratively updates interpolation points and tangent directions until the necessary optimality conditions~\eqref{eqn:H2optimalityConditions} are satisfied by utilizing \Cref{thm:rationalInterp}. The iterative rational Krylov algorithm (IRKA) \cite{GugAB08}, is such an algorithm. The details are presented in \Cref{alg:IRKA}.

\begin{algorithm}[ht]
	\caption{IRKA (\!\!\cite{GugAB08})}
	\label{alg:IRKA}
	\begin{algorithmic}[1]
		\Statex \textbf{Input:} original system $(A,B,C)$, reduced-order $r\in\mathbb{N}$
		\Statex \textbf{Output:} reduced system $(\tilde{A},\tilde{B},\tilde{C})$ of order $r$
		\State Choose initial interpolation points $\{s_1,\ldots,s_r\}$ and tangent directions $\{\mathsf{r}_1,\ldots,\mathsf{r}_r\}$, $\{\ell_1,\ldots,\ell_r\}$. All sets closed under conjugation.
		\State Construct real matrices $V,W\in\R^{n\times r}$ satisfying $W^\top V=I$ and
			\begin{align*}
				(s_iI_n - A)^{-1}B\mathsf{r}_i \in \Ran(V),\\
                (s_iI_n - A^\top)^{-1}C^\top \ell_i \in \Ran(W).
			\end{align*}
		\Repeat 
			\State Compute $\tilde{A} \vcentcolon= W^\top AV$, $\tilde{B} \vcentcolon= W^\top B$, $\tilde{C}\vcentcolon=CV$.
			\State Compute a pole-residue expansion of 
			\begin{displaymath}
                \tilde{G}(s)=\tilde{C}(sI-\tilde{A})^{-1}\tilde{B}=\sum_{i=1}^r \frac{c_i b_i^\top}{s-\lambda_i}.
            \end{displaymath} 
			\State Set $s_i = -\lambda_i$, $\mathsf{r}_i = b_i$ and $\ell_i=c_i$, for $i=1,\ldots,r$.
			\State Construct real matrices $V,W\in\R^{n\times r}$ satisfying $W^\top V=I$ and
			\begin{align*}
				(s_iI_n - A)^{-1}B\mathsf{r}_i \in \Ran(V),\\
                (s_iI_n - A^\top)^{-1}C^\top \ell_i \in \Ran(W).
			\end{align*}
		\Until convergence
	\end{algorithmic}
\end{algorithm}

Since the ROM constructed via IRKA (cf.~\Cref{alg:IRKA}) is obtained via Petrov-Galerkin projection, it is a priori not clear that passivity or the pH structure is preserved, and in general, this will not be the case.  Instead, one can aim for a ROM that only satisfies a subset of the necessary optimality conditions~\eqref{eqn:H2optimalityConditions} and use the remaining degrees of freedom to enforce the pH structure. This can for instance be achieved by constructing $V$ similar as in \Cref{alg:IRKA} and choose $W\vcentcolon= QV(V^\top QV)^{-1}$. This particular choice then directly yields a pH realization. The corresponding modification of \Cref{alg:IRKA} is presented in \Cref{alg:pHIRKA}, originally introduced in \cite{GugPBS12}. Let us emphasize that the choice of $W$ depends on the particular pH representation; see \Cref{rem:pHrepresentation}. Besides the drawback of limiting the degrees of freedom of a Petrov-Galerkin projection, we also expect that the approximation quality of ROMs for different choices of $Q$ varies.

\begin{algorithm}[ht]
	\caption{IRKA-pH (\!\!\cite{GugPBS12})}
	\label{alg:pHIRKA}
	\begin{algorithmic}[1]
		\Statex \textbf{Input:} pH system with $A=(J-R)Q$, $B=G-P$, $C=(G+P)^\top Q$, and $D=S-N$, reduced-order $r\in\mathbb{N}$
		\Statex \textbf{Output:} reduced pH system with $\tilde{A} = (\tilde{J}-\tilde{R})\tilde{Q}$, $\tilde{B}=\tilde{G}-\tilde{P}$, $\tilde{C} = (\tilde{G}+\tilde{P})^\top \tilde{Q}$, $\tilde{D} = S-N$
		\State Choose initial interpolation points $\{s_1,\ldots,s_r\}$ and tangent directions $\{b_1,\ldots,b_r\}$. Both sets closed under conjugation.
		\State Construct a real matrix $V\in\R^{n\times r}$ satisfying
			\begin{displaymath}
				(s_iI_n - (J-R)Q)^{-1}Bb_i \in \Ran(V).
			\end{displaymath}
		\State Calculate $W\vcentcolon= QV(V^\top QV)^{-1}$.
		\Repeat 
			\State Compute $\tilde{J} \vcentcolon= W^\top JW$, $\tilde{R} \vcentcolon= W^\top RW$, $\tilde{Q} \vcentcolon= V^\top QV$, $\tilde{G} \vcentcolon= W^\top G$, $\tilde{P}
			\vcentcolon= W^\top P$.
			\State For $\tilde{A} \vcentcolon= (\tilde{J}-\tilde{R})\tilde{Q}$ compute the $r$ eigenvalues $\lambda_i$ and associated left eigenvectors $y_i$. 
			\State Set $s_i = -\lambda_i$ and $b_i^\top = y_i^\top (G+P)$ for $i=1,\ldots,r$.
			\State Construct a real matrix $V\in\R^{n\times r}$ satisfying
			\begin{displaymath}
				(s_iI_n - (J-R)Q)^{-1}Bb_i \in \Ran(V).
			\end{displaymath}
			\State Calculate $W\vcentcolon= QV(V^\top QV)^{-1}$.
		\Until convergence
	\end{algorithmic}
\end{algorithm}

\begin{remark}
	Interpolatory methods can also be applied solely from data, for instance, via the Loewner framework \cite{MayA07}.  Suppose the frequency points are chosen as the so-called \emph{spectral zeros}. In that case, passivity is retained with the Loewner framework \cite[Sec.~8.2.4]{AntLI17}. Since the spectral zeros are typically not available in the data-driven regime, \cite{BenGV20} propose to construct a realization of the full-order model, which in turn can be used to infer the spectral zeros.  Further methods to construct a pH realization from data are considered, for instance, in~\cite{CheMH19}.
\end{remark}

\section{Passivity-preserving MOR via spectral factorization}
\label{sec:MORspectralFactorization}

Suppose that~\eqref{eqn:FOM} is passive. Then, by virtue of \Cref{thm:equivalence}, there exists a symmetric positive semidefinite matrix $X\in\R^{n\times n}$ satisfying the KYP inequality~\eqref{eqn:KYPinequality}.  Since $\mathcal{W}(X)$ is positive semidefinite, we can factorize 
\begin{displaymath}
	\mathcal{W}(X) = \begin{bmatrix}
		L^\top\\M^\top
	\end{bmatrix}\begin{bmatrix}
		L & M
	\end{bmatrix} = \begin{bmatrix}
		L^\top L & L^\top M\\
		M^\top L & M^\top M
	\end{bmatrix}
\end{displaymath}
with $L\in\R^{k\times n}$ and $M\in\R^{k\times m},k\le n,$ similarly as in~\eqref{eqn:KYPchol}. Define the auxiliary system,
\begin{equation}
	\label{eqn:auxiliarySystem}
	\Sigma_H = \left\{~\begin{aligned}
		\dot{x}(t) &= Ax(t) + Bu(t),\qquad x(0) = 0,\\
		y(t) &= Lx(t) + Mu(t),
	\end{aligned}\right.
\end{equation}
which we refer to as a \emph{spectral factor} of $\Sigma$,  by replacing the matrices in the output equation of~\eqref{eqn:FOM} with the Cholesky factors $L$ and $M$. For a detailed treatise of spectral factorizations in the context of control systems, we refer to \cite[Section 13.4]{ZhoDG96}. The spectral factor's transfer function is given by $H(s) \vcentcolon= M + L(sI_n-A)^{-1}B$. Using the factorization~\eqref{eqn:PopovFactorization} we thus obtain the spectral factorization
\begin{align*}
	 H^\top(-s)H(s) &= \begin{bmatrix}
		(-sI-A)^{-1}B\\
		I_m
	\end{bmatrix}^\top \mathcal{W}(X) \begin{bmatrix}
		(sI-A)^{-1}B\\
		I_m
	\end{bmatrix}= \Phi(s)
\end{align*}
of the Popov function.  The following result details that the $\mathcal{H}_2$ difference between two passive systems can be bounded by the $\mathcal{H}_2$ difference of the associated spectral factors.

\begin{theorem}
	\label{thm:H2errorSpectralFactor}
    Consider passive,  minimal, and asymptotically stable systems $\Sigma$ and $\tilde{\Sigma}$ with transfer functions $G$ and $\tilde{G}$. Let $H$ and $\tilde{H}$ denote transfer functions of associated spectral factors as in~\eqref{eqn:auxiliarySystem}. If $G-\tilde{G},H-\tilde{H}\in \mathcal{H}_2$, then 
    \begin{align}
    \label{eqn:h2_g_vs_h}
    \|G-\tilde{G}\|_{\mathcal{H}_2(\mathbb C_+)} \le c(H,\tilde{H}) \| H(\cdot)-\tilde{H}(\cdot)\| _{\mathcal{H}_2(\mathbb C_+)}
    \end{align}
    with $c(H,\tilde{H}) \vcentcolon= \frac{1}{\sqrt{2}} \big(\|H^\top(-\cdot)\|_{L^{\infty}(\imath \mathbb R)}+\|\tilde{H}(\cdot)\|_{L^{\infty}(\imath \mathbb R)}\big)$.
\end{theorem}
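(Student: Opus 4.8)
The plan is to start from the two spectral factorizations, subtract them to obtain an identity linking $G-\tilde{G}$ to $H-\tilde{H}$, and then isolate $G-\tilde{G}$ by means of the orthogonal decomposition $\mathcal{L}_2(\imath\mathbb{R}) = \mathcal{H}_2(\mathbb{C}_+)\oplus\mathcal{H}_2(\mathbb{C}_-)$. Writing $E\vcentcolon= G-\tilde{G}$ and $\Delta\vcentcolon= H-\tilde{H}$, the spectral factorizations $\Phi(s) = H^\top(-s)H(s)$ and $\tilde{\Phi}(s) = \tilde{H}^\top(-s)\tilde{H}(s)$ together with $\Phi(s) = G(s)+G(-s)^\top$ give, after adding and subtracting $H^\top(-s)\tilde{H}(s)$ on the right-hand side, the key identity
\[ E(s) + E(-s)^\top = H^\top(-s)\Delta(s) + \Delta^\top(-s)\tilde{H}(s). \]

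The decisive step is to recognize that the two summands on the left lie in orthogonal subspaces. Since $\Sigma$ and $\tilde{\Sigma}$ are asymptotically stable, $E$ is analytic on $\mathbb{C}_+$, so $E\in\mathcal{H}_2(\mathbb{C}_+)$ by hypothesis, while $s\mapsto E(-s)^\top$ is analytic on $\mathbb{C}_-$ and hence belongs to $\mathcal{H}_2(\mathbb{C}_-)$. Because $\mathcal{H}_2(\mathbb{C}_+)\perp\mathcal{H}_2(\mathbb{C}_-)$ in $\mathcal{L}_2(\imath\mathbb{R})$, the cross terms vanish, and the substitution $\omega\mapsto-\omega$ (which preserves the Frobenius norm, as does transposition) gives $\|E(-\cdot)^\top\|_{\mathcal{L}_2(\imath\mathbb{R})} = \|E\|_{\mathcal{H}_2(\mathbb{C}_+)}$. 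Consequently
\[ \|E(\cdot)+E(-\cdot)^\top\|_{\mathcal{L}_2(\imath\mathbb{R})}^2 = 2\,\|E\|_{\mathcal{H}_2(\mathbb{C}_+)}^2, \]
which is precisely the origin of the factor $\tfrac{1}{\sqrt{2}}$ in $c(H,\tilde{H})$.

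Finally I would take $\mathcal{L}_2(\imath\mathbb{R})$-norms across the identity. The left-hand side equals $\sqrt{2}\,\|E\|_{\mathcal{H}_2(\mathbb{C}_+)}$ by the previous step, and the right-hand side is estimated by the triangle inequality followed by the submultiplicative bound $\|AB\|_{\mathrm{F}}\le\|A\|_2\|B\|_{\mathrm{F}}$, placing the spectral norm on the bounded factor in each product; together with $\|\Delta^\top(-\cdot)\|_{\mathcal{L}_2} = \|\Delta\|_{\mathcal{L}_2} = \|\Delta\|_{\mathcal{H}_2(\mathbb{C}_+)}$ this yields $\sqrt{2}\,\|E\|_{\mathcal{H}_2(\mathbb{C}_+)}\le\big(\|H^\top(-\cdot)\|_{L^{\infty}(\imath\mathbb{R})}+\|\tilde{H}\|_{L^{\infty}(\imath\mathbb{R})}\big)\|\Delta\|_{\mathcal{H}_2(\mathbb{C}_+)}$, which is the assertion; the $L^\infty$ norms are finite because the shared stable matrix $A$ has no eigenvalues on $\imath\mathbb{R}$, so $H,\tilde{H}\in\mathcal{H}_\infty(\mathbb{C}_+)$. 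I expect the main obstacle to be conceptual rather than computational, namely correctly justifying the orthogonal Hardy-space splitting---that stability places $E$ and $E(-\cdot)^\top$ in complementary half-plane Hardy spaces---since this is exactly what renders the constant sharp through the $\tfrac{1}{\sqrt{2}}$ factor; a secondary care point is tracking which factor carries the spectral norm and which the Frobenius norm, so that the $L^\infty$ bounds attach to $H$ and $\tilde{H}$ and not to $\Delta$.
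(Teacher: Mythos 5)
Your proposal is correct and follows essentially the same route as the paper's proof: the identity $E(\cdot)+E(-\cdot)^\top = H^\top(-\cdot)\Delta(\cdot)+\Delta^\top(-\cdot)\tilde{H}(\cdot)$ is exactly the paper's two-sided expression for $\Phi-\tilde{\Phi}$, the Pythagorean argument via $\mathcal{L}_2(\imath\mathbb{R})=\mathcal{H}_2(\mathbb{C}_+)\oplus\mathcal{H}_2(\mathbb{C}_-)$ produces the same $\sqrt{2}$ factor, and the final triangle-inequality estimate with $L^\infty$ bounds on $H^\top(-\cdot)$ and $\tilde{H}$ matches the paper's conclusion. The only nitpick is the phrase ``the shared stable matrix $A$'': the two spectral factors do not share a state matrix ($H$ uses $A$, $\tilde{H}$ uses $\tilde{A}$), but since both are assumed asymptotically stable the boundedness of both transfer functions on $\imath\mathbb{R}$ holds exactly as you claim.
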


\begin{proof}
    Let us first note that since $G-\tilde{G}\in \mathcal{H}_2$, from \cite[Lem.~A.6.18]{CurZ95} we obtain
    \begin{align*}
        \|G(\cdot)-\tilde{G}(\cdot)\|_{\mathcal{H}_2(\mathbb C_+)}^2
        &= \sup\limits_{\sigma> 0} \int_{-\infty}^{\infty} \| G(\sigma+\imath \omega) - \tilde{G}(\sigma+\imath \omega)\|_{\mathrm{F}}^2\; \mathrm{d}\omega \\
        &= \int_{-\infty}^{\infty} \| G(\imath \omega) - \tilde{G}(\imath \omega) \| _{\mathrm{F}}^2 \;\mathrm{d} \omega = \int_{-\infty}^{\infty} \| G^\top(-\imath \omega) - \tilde{G}^\top (-\imath \omega) \| _{\mathrm{F}}^2 \;\mathrm{d} \omega \\
        &=\sup\limits_{\sigma> 0} \int_{-\infty}^{\infty} \| G^\top(-\sigma-\imath \omega) - \tilde{G}^\top(-\sigma-\imath \omega)\|_{\mathrm{F}}^2\; \mathrm{d}\omega\\
        &= \|G^\top(-\cdot)-\tilde{G}^\top(-\cdot)\|_{\mathcal{H}_2(\mathbb C_-)}^2.
    \end{align*}
    Similarly, it follows that
    \begin{displaymath}
    		\|H(\cdot)-\tilde{H}(\cdot)\|_{\mathcal{H}_2(\mathbb C_+)} = \|H^\top(-\cdot)-\tilde{H}^\top(-\cdot)\|_{\mathcal{H}_2(\mathbb C_-)}.
    	\end{displaymath}
    	From the well known orthogonal decomposition $L^2(\imath \mathbb R)=\mathcal{H}_2(\mathbb C_+) \oplus \mathcal{H}_2(\mathbb C_-)$, see, e.g., \cite[Thm.~A.6.22]{CurZ95}, we conclude $\| \Phi(\cdot) - \tilde{\Phi}(\cdot) \| _{L^2(\imath \mathbb R)} = \sqrt{2}\| G(\cdot)-\tilde{G}(\cdot)\|_{\mathcal{H}_2(\mathbb C_+)}$. We further obtain that 
    \begin{align*}
     \Phi(\cdot)-\tilde{\Phi}(\cdot)     &= H^\top(-\cdot)H(\cdot)-\tilde{H}^\top(-\cdot)\tilde{H}(-\cdot) \\
     &= H^\top(-\cdot)(H(\cdot)-\tilde{H}(-\cdot)) + (H^\top(-\cdot)-\tilde{H}^\top(-\cdot))\tilde{H}(\cdot).
    \end{align*}
    From the asymptotic stability of $A$ and $\tilde{A}$, we conclude $H^\top(-\cdot),\tilde{H}(\cdot) \in L^\infty(\imath \mathbb R)$, which, together with the above equality, yields 
    \begin{align*}
     \| \Phi(\cdot)-\tilde{\Phi}(\cdot)\|_{L^2(\imath \mathbb R)} \le \sqrt{2}c(H,\tilde{H})  \| H(\cdot)-\tilde{H}(\cdot)\| _{\mathcal{H}_2(\mathbb{C}_+)}.
    \end{align*}
\end{proof}

Although~\eqref{eqn:h2_g_vs_h} is not an a-priori error bound,  \Cref{thm:H2errorSpectralFactor} immediately suggests to construct a reduced-order model for the spectral factor $\Sigma_H$, and then construct a passive ROM $\tilde{\Sigma}$ by reversing the construction of $\Sigma_H$. The details are presented in \Cref{alg:MORspectralFactor}.  
\begin{algorithm}[ht]
	\caption{Passivity preserving MOR via spectral factors}
	\label{alg:MORspectralFactor}
	\begin{algorithmic}[1]
		\Statex \textbf{Input:} passive system $(A,B,C,D)$, reduced-order $r\in\mathbb{N}$
		\Statex \textbf{Output:} passive ROM $(\tilde{A},\tilde{B},\tilde{C},\tilde{D})$ of order $r$
		\State\label{alg:MORspectralFactor:s1}Find $X\in\R^{n\times n}$, $X = X^\top\geq 0$ satisfying the KYP inequality $\mathcal{W}(X) \geq 0$.
		\State Compute a Cholesky-like factorization 
			\begin{displaymath}
				\mathcal{W}(X) = \left[\begin{smallmatrix}L & M\end{smallmatrix}\right]^\top \left[\begin{smallmatrix}L & M\end{smallmatrix}\right]
			\end{displaymath}
			and create the spectral factor $\Sigma_H = (A,B,L,M)$ as in~\eqref{eqn:auxiliarySystem}.
		\State\label{alg:MORspectralFactor:s3} Compute ROM $\tilde{\Sigma}_H = (\tilde{A},\tilde{B},\tilde{L},\tilde{M})$ of order $r$ of the spectral factor.
		\State\label{alg:MORspectralFactor:s4}Set $\tilde{D} \vcentcolon= \tfrac{1}{2}\tilde{M}^\top \tilde{M} + \Skew(D)$.
		\State Compute $\tilde{X}\in \R^{r\times r}$, $\tilde{X} = \tilde{X}^\top\geq 0$ satisfying the Lyapunov equation
			\begin{equation}
				\label{eqn:LyapunovSpectralFactorROM}
				\tilde{A}^\top \tilde{X} + \tilde{X}\tilde{A} + \tilde{L}^\top\tilde{L} = 0.
			\end{equation}
		\State Set $\tilde{C} \vcentcolon= \tilde{B}^\top \tilde{X} + \tilde{M}^\top \tilde{L}$.
	\end{algorithmic}
\end{algorithm}
Note that in contrast to pH-IRKA (cf.~\Cref{alg:pHIRKA}), our method is not restricted to one projection subspace and one can thus aim for $\mathcal{H}_2$-optimal MOR (w.r.t.~the spectral factors) with the intention of locally minimizing the second term in \eqref{eqn:h2_g_vs_h}.

Several remarks are in order:
\begin{enumerate}
	\item In view of \Cref{thm:H2errorSpectralFactor} it seems reasonable to choose a MOR method for the spectral factor that preserves the feedthrough matrix, i.e., $M = \tilde{M}$, since otherwise $\|H-\tilde{H}\|_{\mathcal{H}_2}$ is unbounded. This also serves as our primary motivation for the particular choice in \cref{alg:MORspectralFactor:s4} of \Cref{alg:MORspectralFactor}, which guarantees $D = \tilde{D}$ whenever $M=\tilde{M}$.  Note however, that the skew-symmetric part of $\tilde{D}$ can be chosen arbitrarily without affecting the passivity of the ROM.
	\item To ensure a positive definite solution of the Lyapunov equation~\eqref{eqn:LyapunovSpectralFactorROM}, we have to assume that the eigenvalues of $\tilde{A}$ have negative real part. If the original model~\eqref{eqn:FOM} is asymptotically stable, then this can be achieved by any MOR method that preserves asymptotic stability.  Note that the matrices in the Lyapunov equation are low-dimensional, and hence, the solution of the Lyapunov equation can be computed efficiently~\cite{Sim16}. 
	\item A pH representation of the ROM can be obtained as in \Cref{rem:pHrepresentation}. If the Lyapunov equation~\eqref{eqn:LyapunovSpectralFactorROM} is directly solved for the Cholesky or square root factors of $\tilde{X} = T^\top T$, see for instance \cite{Sim16} and the references therein, then we can perform a state-space transformation with $T$, i.e.,
	\begin{displaymath}
		\tilde{A}_T = T^{-1}\tilde{A}T^\top, \qquad \tilde{B}_T = T^{-1}\tilde{B}, \qquad \tilde{C}_T = \tilde{C}T^\top, \qquad \tilde{D}_T = \tilde{D}.
	\end{displaymath}
	Using this coordinate transformation, it is easy to see that the associated KYP inequality is solved by the identity matrix~\cite{BeaMV19}. Consequently, following \Cref{rem:pHrepresentation}, a pH realization of the form
	\begin{align*}
		\dot{\tilde{x}}_T &= (\tilde{J}_T-\tilde{R}_T)\tilde{x}_T + (\tilde{G}_T-\tilde{P}_T)u,\\
		\tilde{y} &= (\tilde{G}_T + \tilde{P}_T)^\top \tilde{x}_T + (\tilde{S}_T+\tilde{N}_T)u
	\end{align*}		
	 is obtained by setting $\tilde{J}_T \vcentcolon= \Skew(\tilde{A}_T)$, $\tilde{R}_T \vcentcolon= -\Sym(\tilde{A}_T)$, $\tilde{G}_T \vcentcolon= \tfrac{1}{2}(\tilde{C}_T^\top + \tilde{B}_T)$, $\tilde{P}_T \vcentcolon= \tfrac{1}{2}(\tilde{C}_T^\top - \tilde{B}_T)$, $\tilde{S}_T \vcentcolon= \Sym(\tilde{D}_T)$, and $\tilde{N}_T \vcentcolon= \Skew(\tilde{D}_T)$.
	\item If the ROM for the auxiliary system is constructed via projection, i.e.,  there exist matrices $V,W\in\R^{n\times r}$, satisfying $W^\top V = I_r$, and $\tilde{A} = W^\top A V$, $\tilde{B} = W^\top B$, $\tilde{L} = LV$, $\tilde{M} = M$, then the ROM obtained with \Cref{alg:MORspectralFactor} is given by 
		\begin{equation}
			\label{eqn:correctionTerm}
			\tilde{\Sigma} = (W^\top AV,W^\top B, CV + \widehat{C},D)
		\end{equation}
		with $\widehat{C} \vcentcolon= B^\top(W\tilde{X}-XV)$.
		In particular, the ROM can be decomposed as a part obtained via Petrov-Galerkin projection and some additional \emph{correction} term that ensures passivity.  While this correction term is inherent to our methodology, we remark that there are also algorithms \cite{Gri04} that construct a nearby passive system to a non-passive system by finding the smallest perturbation to the output matrix that renders the system passive. 
		
		Let us recall \cite[Thm.~3.6]{GugAB08} that the optimality conditions \eqref{h2_left_oc} for the system $(\tilde{A},\tilde{B},\tilde{L})$ are equivalent to the matrix formulation $\tilde{B}^\top \tilde{X} = B^\top Z$, where $Z$ solves the Sylvester equation 
		\begin{align*}A^\top
		 Z +Z\tilde{A} + L^\top \tilde{L}=0.
		\end{align*}
        The entire set of conditions are sometimes referred to as \emph{Wilson optimality conditions} and have initially been discussed in \cite{Wil70}. In view of these conditions, the correction term takes the form $B^\top(Z-XV)$. Due to the equations for $Z$ and $X$, this term can also be interpreted as a solution to the Sylvester equation
        \begin{align*}
         A^\top (Z-XV)+(Z-XV)\tilde{A}+XV\tilde{A}-XAV=0.
        \end{align*}
        Hence, if $A$ and $\tilde{A}$ are asymptotically stable, there exists a constant $c>0$ such that
        \begin{align*}
         \| Z-XV\| &\le c\, \| XV\tilde{A}-XAV\| \le c\, \| X\| \|(VW^\top -I)AV\|.
        \end{align*}
        In particular, if $\Ran(V)$ is $A$-invariant, we obtain $Z=XV$, and, as a consequence $\tilde{C}=CV$. Moreover, the previous bound motivates to use $X$ of small norm if the reduced model should be close to one obtained by projection. 
\end{enumerate}

\begin{theorem}
	Assume that~\eqref{eqn:FOM} is passive and asymptotically stable.  If the ROM of the auxiliary system is minimal and asymptotically stable, then the ROM constructed with \Cref{alg:MORspectralFactor} is asymptotically stable and passive.
\end{theorem}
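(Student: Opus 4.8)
The plan is to treat the two assertions separately and to exploit that every step of \Cref{alg:MORspectralFactor} is arranged so that the Lyapunov solution $\tilde{X}$ becomes a positive definite solution of the reduced KYP inequality. Asymptotic stability is immediate: the state matrix $\tilde{A}$ of the reduced model $\tilde{\Sigma}$ is identical to the state matrix of the reduced spectral factor $\tilde{\Sigma}_H$, which is assumed asymptotically stable, so $\sigma(\tilde{A}) \subset \mathbb{C}_-$ and $\tilde{\Sigma}$ is asymptotically stable.

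For passivity, the goal is to show $\mathcal{W}(\tilde{X}) \geq 0$ for the solution $\tilde{X}$ of~\eqref{eqn:LyapunovSpectralFactorROM}. First I would record that, since $\tilde{A}$ is Hurwitz and $(\tilde{A},\tilde{B},\tilde{L},\tilde{M})$ is minimal---in particular $(\tilde{A},\tilde{L})$ is observable---the observability Gramian $\tilde{X} = \int_0^\infty e^{\tilde{A}^\top t}\tilde{L}^\top \tilde{L}\, e^{\tilde{A}t}\,\mathrm{d}t$ is the unique solution of~\eqref{eqn:LyapunovSpectralFactorROM} and satisfies $\tilde{X} = \tilde{X}^\top > 0$. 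Next I would substitute the quantities produced by the algorithm into the four blocks of $\mathcal{W}(\tilde{X})$. The $(1,1)$ block equals $\tilde{L}^\top \tilde{L}$ directly from~\eqref{eqn:LyapunovSpectralFactorROM}; using $\tilde{C} = \tilde{B}^\top \tilde{X} + \tilde{M}^\top \tilde{L}$ together with $\tilde{X}^\top = \tilde{X}$, the off-diagonal blocks collapse to $\tilde{C}^\top - \tilde{X}\tilde{B} = \tilde{L}^\top \tilde{M}$ and $\tilde{C} - \tilde{B}^\top \tilde{X} = \tilde{M}^\top \tilde{L}$; and since $\tilde{D} = \tfrac{1}{2}\tilde{M}^\top \tilde{M} + \Skew(D)$ has symmetric part $\tfrac{1}{2}\tilde{M}^\top \tilde{M}$, the $(2,2)$ block is $\tilde{D} + \tilde{D}^\top = \tilde{M}^\top \tilde{M}$. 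This produces the Cholesky-type factorization
\[
\mathcal{W}(\tilde{X}) = \begin{bmatrix}\tilde{L}^\top \\ \tilde{M}^\top\end{bmatrix}\begin{bmatrix}\tilde{L} & \tilde{M}\end{bmatrix} \geq 0,
\]
exactly of the form~\eqref{eqn:KYPchol}.

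To conclude passivity I would argue directly via a storage function rather than invoking \Cref{thm:equivalence}, whose KYP characterization presupposes minimality of the full reduced realization $\tilde{\Sigma}$ (which is not among the hypotheses). Setting $\mathcal{H}(\tilde{x}) := \tfrac{1}{2}\tilde{x}^\top \tilde{X}\tilde{x} \geq 0$, a direct computation along any reduced trajectory gives $\tilde{y}^\top u - \tfrac{\mathrm{d}}{\mathrm{d}t}\mathcal{H}(\tilde{x}) = \tfrac{1}{2}\left[\begin{smallmatrix}\tilde{x}\\u\end{smallmatrix}\right]^\top \mathcal{W}(\tilde{X})\left[\begin{smallmatrix}\tilde{x}\\u\end{smallmatrix}\right] \geq 0$, and integrating over $[t_0,t_1]$ yields the dissipation inequality~\eqref{eqn:dissipationInequality}, so $\tilde{\Sigma}$ is passive in the sense of \Cref{def:posRealPassPH}.

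The computation itself is routine; the one genuinely load-bearing point---and the closest thing to an obstacle---is upgrading $\tilde{X} \geq 0$ to $\tilde{X} > 0$, which is precisely where the minimality (observability) assumption on the reduced spectral factor enters. This strict positivity is also what makes $\mathcal{H}$ a bona fide Hamiltonian and underlies the port-Hamiltonian realization discussed in the remarks following \Cref{alg:MORspectralFactor}.
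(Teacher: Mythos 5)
Your proposal is correct and follows essentially the same route as the paper: establish $\mathcal{W}(\tilde{X}) = \left[\begin{smallmatrix}\tilde{L}^\top\\ \tilde{M}^\top\end{smallmatrix}\right]\left[\begin{smallmatrix}\tilde{L} & \tilde{M}\end{smallmatrix}\right] \geq 0$ from the Lyapunov equation and the definitions of $\tilde{C}$ and $\tilde{D}$, then verify the dissipation inequality directly with the storage function $\tfrac{1}{2}\tilde{x}^\top\tilde{X}\tilde{x}$ (the paper's computation uses $u^\top\Skew(D)u=0$ in exactly the way your $(2,2)$-block cancellation does). Your packaging of the trajectory computation as the single identity $\tilde{y}^\top u - \tfrac{\mathrm{d}}{\mathrm{d}t}\mathcal{H}(\tilde{x}) = \tfrac{1}{2}\left[\begin{smallmatrix}\tilde{x}\\ u\end{smallmatrix}\right]^\top\mathcal{W}(\tilde{X})\left[\begin{smallmatrix}\tilde{x}\\ u\end{smallmatrix}\right]$ is merely a tidier presentation of the same argument.
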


\begin{proof}
	\emph{Asymptotic stability:} The asymptotic stability of the ROM is an immediate consequence of the asymptotic stability of the ROM for the auxiliary system.
	
	\emph{Passivity:} Since the ROM for the auxiliary system is asymptotically stable, the Lyapunov equation~\eqref{eqn:LyapunovSpectralFactorROM} has a unique solution $\tilde{X}$ and this solution is symmetric and, due to minimality of the reduced spectral factor, positive definite. By construction, we now have 
	\begin{align*}
	\tilde{W}(\tilde{X}) &= \begin{bmatrix} 
		-\tilde{A}^\top \tilde{X} - \tilde{X}\tilde{A} & \tilde{C}^\top -\tilde{X}\tilde{B} \\ 
		\tilde{C} - \tilde{B}^\top \tilde{X} & \tilde{D}+\tilde{D}^\top 
	\end{bmatrix}= \begin{bmatrix}
  		\tilde{L}^\top \tilde{L} & \tilde{L}^\top \tilde{M} \\ 
  		\tilde{M}^\top \tilde{L} & \tilde{M}^\top \tilde{M}
	\end{bmatrix} \geq 0.
	\end{align*}
	Let us define a storage function $\tilde{\mathcal{H}}\colon \mathbb R^r \to \mathbb R_{\ge 0}$ by $\tilde{\mathcal{H}}(x)=\frac{1}{2}\tilde{x}^\top \tilde{X} \tilde{x}$ and observe that, since $\dot{\tilde{x}}=\tilde{A}\tilde{x}+\tilde{B}u$, we have
	\begin{align*}
	 \frac{\dd }{\dd t}\tilde{\mathcal{H}}(\tilde{x})
	 &= \frac{1}{2} \tilde{x}^\top (\tilde{A}^\top \tilde{X}+\tilde{X}\tilde{A})x + \frac{1}{2}u^\top \tilde{B}^\top \tilde{X} \tilde{x} +\frac{1}{2}\tilde{x}^\top \tilde{X}\tilde{B}u \\
	 &=-\frac{1}{2}\tilde{x}^\top \tilde{L}^\top \tilde{L} \tilde{x} + \frac{1}{2}u^\top (\tilde{C}-\tilde{M}^\top \tilde{L}) \tilde{x}+ \frac{1}{2}\tilde{x}^\top (\tilde{C}^\top -\tilde{L}^\top \tilde{M}) u \\
	 &= -\frac{1}{2} \begin{bmatrix} \tilde{x}\\ u \end{bmatrix}^\top \begin{bmatrix} \tilde{L}^\top \tilde{L} & \tilde{L}^\top \tilde{M} \\ \tilde{M}^\top \tilde{L} & \tilde{M}^\top \tilde{M} \end{bmatrix} \begin{bmatrix} \tilde{x}\\ u \end{bmatrix}
	 + \frac{1}{2}u^\top \tilde{C}\tilde{x} + \frac{1}{2}\tilde{x}^\top \tilde{C}^\top u + \frac{1}{2}u^\top \tilde{M}^\top \tilde{M} u \\
	 &\le  \tilde{x}^\top \tilde{C}^\top u +\frac{1}{2}u^\top \tilde{M}^\top \tilde{M}u = \tilde{x}^\top \tilde{C}^\top u +u^\top \tilde{D}u=\tilde{y}^\top u
	\end{align*}
    where the last step follows since $u^\top \Skew(D)u=0$. Integration of the above inequality shows the passivity of the system $\tilde{\Sigma} = (\tilde{A},\tilde{B},\tilde{C},\tilde{D})$.
\end{proof}

In general, we cannot ensure that the ROM constructed via \Cref{alg:MORspectralFactor} is minimal, as the following example details.
\begin{example}\label{ex:no_minimality}
	Assume that the reduced spectral factor $\tilde{\Sigma}_H = (\tilde{A},\tilde{B},\tilde{L},\tilde{M})$ is given by
	\begin{align*}
		\tilde{A} &= \begin{bmatrix}
			-1 & 0\\
			2 & -2
		\end{bmatrix}, & \tilde{B} &= \begin{bmatrix}
			1\\0
		\end{bmatrix}, & \tilde{L} &= \sqrt{2}\begin{bmatrix}
			1 & -1\\
			0 & 1
		\end{bmatrix}, & \tilde{M} &= \begin{bmatrix}0\\0 \end{bmatrix}.
	\end{align*}
	Straightforward computations show that $\tilde{\Sigma}_H$ is controllable and observable, thus minimal.  We observe
	\begin{displaymath}
		\tilde{A}^\top + \tilde{A} + \tilde{L}^\top \tilde{L} = 0,
	\end{displaymath}
	implying that $\tilde{X} = I_2$ is the unique solution of~\eqref{eqn:LyapunovSpectralFactorROM}.  In view of \Cref{alg:MORspectralFactor} this implies $\tilde{C} = \tilde{B}^\top$. Nevertheless, $(\tilde{A},\tilde{C})$ is not observable, showing that $\tilde{\Sigma}$ constructed via \Cref{alg:MORspectralFactor} is not minimal.
\end{example}

\begin{remark}
  If, as in Example~\ref{ex:no_minimality}, observability is not given, the ROM constructed via \Cref{alg:MORspectralFactor} can be replaced by a minimal realization by, e.g., classical balanced truncation or subsequent truncations of Kalman controllability and observability decompositions. However, in numerical computations this may result in a loss of passivity. As an alternative, we could use the structure preserving method from \cite{BreMS20} as a post processing step as follows. Assume that $(\tilde{A},\tilde{B})$ is controllable while the Kalman observability matrix $\tilde{\mathcal{O}}:=[\tilde{C}^\top,\tilde{A}^\top \tilde{C}^\top,\dots,(\tilde{A}^{r-1})^\top \tilde{C}^\top]^\top$  satisfies $\rank(\tilde{\mathcal{O}})=k<r$. Due to \cite[Cor.~16 and Rem.~17]{BreMS20}, we can compute a passive reduced model $(\hat{A},\hat{B},\hat{C},\hat{D})$ of dimension $k$ such that $\| \tilde{G}-\hat{G}\|_{\mathcal{H}_2}=0$ and, in particular, $\tilde{C}\tilde{A}^{i}\tilde{B}=\hat{C}\hat{A}^i\hat{B},\ i\ge 0$.   Since the reduction relies on balancing the observability Gramian, we can conclude that $\rank(\tilde{\mathcal{O}})=\rank(\hat{\mathcal{O}})=k$. Denoting the individual Kalman controllability matrices by $\tilde{\mathcal{K}},\hat{\mathcal{K}}$, we now obtain 
  \begin{align*}
   \rank(\hat{\mathcal{K}}) &\ge \rank(\hat{\mathcal{O}}\hat{\mathcal{K}})=\rank(\tilde{\mathcal{O}}\tilde{\mathcal{K}})\ge \rank(\tilde{\mathcal{O}})+\rank(\tilde{\mathcal{K}})-r=\rank(\tilde{\mathcal{O}})=k.
  \end{align*}
  As a consequence, the system $(\hat{A},\hat{B},\hat{C},\hat{D})$ is controllable and observable, hence minimal.
\end{remark}

\begin{remark}
	If instead of the original system~\eqref{eqn:FOM} its dual system, given by
	\begin{equation}
		\label{eqn:dualFOM}
		\begin{aligned}
			\dot{x}_{\mathrm{d}} &= -A^\top x_{\mathrm{d}} - C^\top u_{\mathrm{d}},\qquad x_{\mathrm{d}}(0) = 0,\\
			y_{\mathrm{d}} &= B^\top x_{\mathrm{d}} + D^\top u_{\mathrm{d}},
		\end{aligned}
	\end{equation}
	is passive, then the KYP inequality~\eqref{eqn:KYPinequality} is given by
	\begin{displaymath}
		\mathcal{W}_{\mathrm{d}}(X_{\mathrm{d}}) = \begin{bmatrix}
			AX_{\mathrm{d}} + X_{\mathrm{d}}A^\top & B+X_{\mathrm{d}}C^\top\\
			B^\top + CX_{\mathrm{d}} & D + D^\top
		\end{bmatrix} \geq 0.
	\end{displaymath}
	Simple algebraic manipulations yield the factorization
	\begin{displaymath}
		\Phi(s) = \begin{bmatrix}
			(sI_n+A^\top)^{-1}(-C)^\top\\
			I_m
		\end{bmatrix}^\top\mathcal{W}_\mathrm{d}(X_{\mathrm{d}})\begin{bmatrix}
			(sI_n+A^\top)^{-1}(-C)^\top\\
			I_m
		\end{bmatrix}.
	\end{displaymath}
	In this case, we can proceed similarly as in \Cref{alg:MORspectralFactor} to construct a MOR methodology that guarantees that the dual system of the ROM is passive. 
\end{remark}

Let us emphasize that the spectral factor depends on the particular solution $X$ of the KYP inequality~\eqref{eqn:KYPinequality}. In particular, also the reduced system depends on the particular solution of the KYP inequality.  If the system at hand is given in a pH representation~\eqref{eqn:pH}, then we have seen in sub\cref{subsec:KYP} that we do not have to solve the KYP inequality in \cref{alg:MORspectralFactor:s1} of \Cref{alg:MORspectralFactor}, but can directly work with $X = Q$. Nevertheless, this choice does not guarantee that the spectral factor is particularly amendable to model-order reduction. For a related discussion for balanced truncation, we refer to \cite{BreMS20}.
Instead, one may ask if there is a particular pH representation (corresponding to a specific solution of the KYP inequality) favorable for model reduction. The following result, together with \cite[Thm.~1]{UngG19}, implies that for model reduction purposes, a particularly suitable pH representation as in \Cref{rem:pHrepresentation} is given by~$\Xmin$.

\begin{theorem}
	\label{thm:HankelSingularValuesSpectralFactor}
    Let $\Sigma=(A,B,C,D)$ be passive, minimal, and asymptotically stable. Let $\Sigma_H=(A,B,L,M)$ be the spectral factorization associated with the minimal solution $\Xmin$ of the KYP inequality~\eqref{eqn:KYPinequality}. Then, for any solution $X$ of \eqref{eqn:KYPinequality} and its spectral factorizations $\Sigma_{\hat{H}}=(A,B,\hat{L},\hat{M})$ of $\Sigma$, it holds that
    \begin{align*}
        \sigma_{k}(\Sigma_H)\le \sigma_k(\Sigma_{\hat{H}}) , \quad k=1,\dots,n,
    \end{align*}
    where $\sigma_{k}$ denotes the $k$-th Hankel singular value of $\Sigma_H$ and $\Sigma_{\hat{H}},$ respectively.
\end{theorem}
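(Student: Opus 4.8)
The plan is to reduce the comparison of Hankel singular values to a comparison of observability Gramians, exploiting the fact that the two spectral factors $\Sigma_H$ and $\Sigma_{\hat{H}}$ share the same pair $(A,B)$ and differ only in their output matrices $L,M$ versus $\hat{L},\hat{M}$. Recall that the $k$-th Hankel singular value of an asymptotically stable system equals $\sigma_k=\sqrt{\lambda_k(\mathcal{P}\mathcal{Q})}$, where $\mathcal{P}$ and $\mathcal{Q}$ are the controllability and observability Gramians, i.e., the unique symmetric positive semidefinite solutions of
\[
	A\mathcal{P}+\mathcal{P}A^\top+BB^\top = 0, \qquad A^\top\mathcal{Q}+\mathcal{Q}A+L^\top L = 0.
\]
Since $\Sigma_H$ and $\Sigma_{\hat{H}}$ have identical $(A,B)$, they share the \emph{same} controllability Gramian $\mathcal{P}$, which by minimality (controllability of $(A,B)$) is positive definite.

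The crux of the argument is to identify the observability Gramian of each spectral factor with its underlying KYP solution. By construction, the factors arise from the Cholesky-like factorization $\mathcal{W}(X)=\left[\begin{smallmatrix}L&M\end{smallmatrix}\right]^\top\left[\begin{smallmatrix}L&M\end{smallmatrix}\right]$, so the $(1,1)$-block equation reads $-A^\top X-XA=L^\top L$, i.e.\ $X$ itself solves the observability Lyapunov equation of the corresponding spectral factor. As $A$ is asymptotically stable, this Lyapunov equation admits a unique solution; hence the observability Gramian of $\Sigma_H$ equals $\Xmin$ and that of $\Sigma_{\hat{H}}$ equals $X$.

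It then remains to combine these facts with the ordering of KYP solutions. Using the similarity $\mathcal{P}\mathcal{Q}\sim\mathcal{P}^{1/2}\mathcal{Q}\mathcal{P}^{1/2}$, the Hankel singular values become
\[
	\sigma_k(\Sigma_H)^2=\lambda_k\big(\mathcal{P}^{1/2}\Xmin\mathcal{P}^{1/2}\big), \qquad \sigma_k(\Sigma_{\hat{H}})^2=\lambda_k\big(\mathcal{P}^{1/2}X\mathcal{P}^{1/2}\big).
\]
Since $\Xmin$ is the minimal solution of the KYP inequality, we have $0<\Xmin\le X$, and congruence with $\mathcal{P}^{1/2}$ preserves this ordering, giving $\mathcal{P}^{1/2}\Xmin\mathcal{P}^{1/2}\le\mathcal{P}^{1/2}X\mathcal{P}^{1/2}$. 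Weyl's monotonicity theorem for the ordered eigenvalues of symmetric matrices then yields $\lambda_k(\mathcal{P}^{1/2}\Xmin\mathcal{P}^{1/2})\le\lambda_k(\mathcal{P}^{1/2}X\mathcal{P}^{1/2})$ for each $k$, and taking nonnegative square roots concludes the proof.

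I expect the main obstacle to be the identification carried out in the second step: once one recognizes that the first Lur'e equation is precisely the observability Lyapunov equation of the spectral factor, the remaining monotonicity argument is entirely routine. A secondary point worth stating carefully is that $\Xmin$ is minimal among \emph{all} KYP solutions (not merely among Riccati/Lur'e solutions), so that $\Xmin\le X$ is legitimate for the arbitrary KYP solution $X$ appearing in the statement; this is exactly the bound recalled in the discussion of \cref{subsec:KYP}.
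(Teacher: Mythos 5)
Your proposal is correct and follows essentially the same route as the paper's proof: identify the observability Gramian of each spectral factor with its underlying KYP solution via the $(1,1)$-block Lur'e equation, note the shared (positive definite) controllability Gramian, reduce the similarity $\mathcal{P}\mathcal{Q}$ to a congruence, and conclude by eigenvalue monotonicity from $\Xmin\le X$. The only cosmetic differences are that the paper uses a Cholesky factor of $\mathcal{P}$ rather than the symmetric square root and invokes the Courant--Fischer min-max principle directly instead of citing Weyl monotonicity; these are interchangeable.
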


\begin{proof}
    Following \cite[Lemma 5.8]{Ant05}, the Hankel singular values of $\Sigma_H,\Sigma_{\hat{H}}$ are given by
    \begin{align*}
     \sigma_k(\Sigma_H)= \sqrt{\lambda_k(PQ)},\quad \sigma_k(\Sigma_{\hat{H}})=\sqrt{\lambda_k(\hat{P}\hat{Q})},
    \end{align*}
    where $\lambda_k$ denotes the $k$-th eigenvalue and $P,Q,\hat{P}$ and $\hat{Q}$ are the controllability and observability Gramians of $\Sigma_H$ and $\Sigma_{\hat{H}}$, respectively. Note that $P=\hat{P},Q$ and $\hat{Q}$ are the unique solutions of the Lyapunov equations
    \begin{align*}
     AP+PA^\top + BB^\top &= 0, \\
     A^\top Q+QA + L^\top L &= 0, \\
     A^\top \hat{Q}+\hat{Q}A+ \hat{L}^\top \hat{L} &= 0,
    \end{align*}
    implying that $\Xmin=Q$ and $\hat{X}=\hat{Q}$. Since $\Sigma$ is minimal, the pair $(A,B)$ is controllable and, hence, $P=\hat{P}$ is positive definite. We can thus consider its Cholesky decomposition $P=FF^\top$ and obtain 
    \begin{align*}
     (\sigma_k(\Sigma_H))^2 = \lambda_k(PQ)=\lambda_k(F^{-1}PQF)=\lambda_k(F^\top X_{\mathrm{\min}}F),\\
     (\sigma_k(\Sigma_{\hat{H}}))^2 = \lambda_k(P\hat{Q})=\lambda_k(F^{-1}P\hat{Q}F)=\lambda_k(F^\top \hat{X}F).
    \end{align*}
    The assertion now is a consequence of the Courant-Fischer-Weyl min-max principle \cite[Theorem 8.1.2]{GolV96} and the following considerations
\begin{align*}
    (\sigma_k(\Sigma_{\hat{H}}))^2 
    &= \lambda_k(F^\top \hat{X}F)
    = \min_{\substack{\mathcal{X}_k\subset \mathbb R^n\\ \dim(\mathcal{X}_k)=k}}\ \max_{\substack{z\in \mathcal{X}_k\\ \|z\|=1}} z^\top (F^\top \hat{X}F)z \\
    & \ge \min_{\substack{\mathcal{X}_k\subset \mathbb R^n\\ \dim(\mathcal{X}_k)=k}}\ \max_{\substack{z\in \mathcal{X}_k\\ \|z\|=1}} z^\top (F^\top \Xmin F)z
     =\lambda_k(F^\top \Xmin F)=(\sigma_k({\Sigma}_H))^2.
\end{align*}
\end{proof}

\begin{remark}
	\label{rem:sparsity}
	While different factorizations of the system matrix for a pH representation may result in dense system matrices $J,R$, and $Q$, we emphasize that our method works directly with the system matrix $A$, and hence, any sparsity pattern in $A$ can be exploited in the construction of the ROM for the spectral factor. In contrast, pH-IRKA (see \Cref{alg:pHIRKA}) requires matrix products with the matrix $Q$, which, depending on the particular pH representation may be computationally more or less involved in a large-scale context.
\end{remark}

\subsection*{Computational complexity}

Let us briefly discuss the computational effort required by Algorithm \ref{alg:MORspectralFactor}. For this, let us discuss the case of dense matrices, i.e., a worst-case estimate of the computational complexity. If the given Hamiltonian $Q$ is supposed to be replaced by one of the extremal solutions $X$ satisfying the KYP inequality $\mathcal{W}(X)\ge 0$ and the term $D+D^\top$ is invertible, the computations are essentially given by solving an algebraic Riccati equation. Typical dense solvers rely on an associated Hamiltonian eigenvalue problem of dimension $2n$, leading to a complexity of $\mathcal{O}(8n^3)$.  The subsequent computation of a Cholesky decomposition of the matrix $W(X)\in \mathbb R^{n+m\times n+m}$ can be realized in $\mathcal{O}((n+m)^3)$ computations. The computational effort for obtaining the reduced-order model depends on the number of iterations needed until convergence. In particular, if $\ell$ steps of an iterative procedure, such as IRKA, have to be carried out, then $2\ell n$ linear systems of equations have to be solved. For dense matrices, this will lead to a complexity of $2\ell n^3$. All further calculations depend only on the reduced system matrices and can therefore be neglected. On the other hand, large-scale systems usually result from spatial semi-discretizations of partial differential equations and thus yield sparse matrices. In this case, one can often exploit low-rank approximation procedures that scale linearly with the number of nonzero entries. For example, the solution of algebraic Riccati equations can be efficiently handled up to matrix dimension of the order $n=10^6$, see, e.g., \cite{BenS13} and the references therein.

\subsection*{Contractivity preserving model reduction}

    In context of model reduction, methods dedicated to positive real systems often come with appropriate modifications for bounded real systems and vice versa, e.g., \cite{GugA04,GuiO13,ReiS10}. Similar to condition \eqref{eqn:positiveReal}, bounded real systems $(\breve{A},\breve{B},\breve{C},\breve{D})$ are characterized by the positive (semi) definiteness of the function 
    \begin{gather*}
        \Psi\colon\C\setminus \big(\sigma(\breve{A})\cup \sigma(-\breve{A})\big) \to\C^{m\times m},\quad
        s\mapsto I_m- \breve{G}(-s)^\top \breve{G}(s)
    \end{gather*}
    on the imaginary axis and imply the contractivity of the system, i.e., we have 
    \begin{align*}
     \int_0^t \| y(\tau) \|_2^2 \dd \tau \le \int_0^t \| u(\tau)\| _2^2 \dd\tau
    \end{align*}
    for all $t>0$ and inputs $u\in L^2(0,t;\mathbb R^m)$ and $y$ associated with $(\breve{A},\breve{B},\breve{C},\breve{D})$. Following the discussion from \cref{sec:MORspectralFactorization} and the bounded real lemma \cite{AndV73}, it seems natural to aim for a factorization of $\Psi$ in terms of solutions to the bounded real Lur'e equation
    \begin{align*}
      \breve{A}^\top \breve{X} +\breve{X} \breve{A} + \breve{C}^\top \breve{C} &= -\breve{L}^\top \breve{L}\\
      \breve{X}\breve{B} + \breve{C}^\top \breve{D} &= -\breve{L}^\top \breve{M} \\
      I_m-\breve{D}^\top \breve{D} &= \breve{M}^\top \breve{M}.
    \end{align*}
    However, note that, in contrast to the positive real case, for given $(\breve{A},\breve{B},\breve{L},\breve{M})$, there is no obvious way to construct corresponding matrices $\breve{C}$ and $\breve{D}$ and an appropriate modification of \Cref{alg:MORspectralFactor} is unclear.  On the other hand, we may reverse the line of argument from \cite{ReiS10} as follows. Provided that $\mathrm{det}(I_m+\breve{G}(s))\neq 0$ and $I_m+\breve{D}$ is invertible, let us utilize the \emph{Moebius transformation} $\mathcal{M}$ defined by 
    \begin{align*}
     \breve{G}(s) \mapsto \mathcal{M}(\breve{G})(s)=G(s)=(I_m-\breve{G}(s))^{-1}(I_m+\breve{G}(s))
    \end{align*}
    which yields a positive real system $G$ that can be reduced by \Cref{alg:MORspectralFactor}. A reduced bounded real system can subsequently be constructed by application of $\mathcal{M}^{-1}$. Let us denote the associated reduced systems by $\hat{G}$ and $\hat{\breve{G}}$, respectively. Then, using the identities $\breve{G}(s)=(G(s)-I_m)(G(s)+I_m)^{-1}$ as well as $(G(s)+I_m)^{-1}=\frac{1}{2}(I_m-\breve{G}(s))$ and some algebraic manipulations similar to those in \cite{ReiS10}, we obtain 
    \begin{align*}
     \breve{G}-\hat{\breve{G}}=\frac{1}{2}(I_m-\hat{\breve{G}})(G-\hat{G})(I_m-\breve{G}).
    \end{align*}
    As a consequence, this leads to an estimate of the form
    \begin{align*}
     \| \breve{G}-\hat{\breve{G}}\| _{\mathcal{H}_2} \le \frac{1}{2} \| I_m-\hat{\breve{G}} \|_{\mathcal{H}_\infty} \| G-\hat{G}\|_{\mathcal{H}_2} \| I_m-\breve{G}\| _{\mathcal{H}_\infty}
    \end{align*}
    provided that the previous terms are all finite. Of course, in combination with \Cref{thm:H2errorSpectralFactor} one could also state an estimate in terms of the $\mathcal{H}_2$ error of the (positive real) spectral factors $H$ and $\hat{H}$.
    
    \section{Numerical examples}
\label{sec:examples}
In this section, we illustrate our theoretical findings and our novel passivity-preserving MOR method by means of numerical examples.  We emphasize that the main purpose of these examples is the illustration of the theoretical findings. An efficient implementation exploiting sparsity patterns is subject to further research. With regard to the implementation of the methods, the following remarks are in order:
\begin{itemize}
	\item To ensure a (numerically) minimal realization, we used the structure-preserving truncation algorithm presented in \cite[Sec.~5]{BreMS20} with truncation tolerance $\varepsilon_{\mathrm{trunc}} = $\num{1e-12}. The minimal realization is used to construct the ROMs. In contrast, we will use the original (numerically not minimal) realization for the error computations.
	\item For the computation of the extremal solutions of the Lur'e equation~\eqref{eqn:LureEquations}, we added an artificial feedthrough term $D+D^\top = \num{1e-12}I_m$ and constructed a solution by solving the associated Riccati equation~\eqref{eqn:ARE} using the \matlab\ built-in routine \texttt{icare}.
	\item The reduced spectral factor (step~\ref{alg:MORspectralFactor:s3} in \Cref{alg:MORspectralFactor}) is computed via IRKA (\Cref{alg:IRKA}). 
	\item For the norm computations, we used the \texttt{Control System Toolbox}.
	\item For the initialization of IRKA (\Cref{alg:IRKA}) and pH-IRKA (\Cref{alg:pHIRKA}) we compute random matrices $V,W\in\R^{n\times r}$ (for pH-IRKA only $V\in\R^{n\times r}$), construct a ROM and use this ROM to choose interpolation points and tangent directions.  In our experiments, we noticed that sometimes, the iteration got stuck in a flat local minimum. To minimize the random initialization effects, we performed the reduction \num{3} times and used only the best result (with respect to the $\mathcal{H}_2$ norm). If the ROM constructed via IRKA is not asymptotically stable, then we simply restart the iteration until an asymptotically stable ROM is constructed.
\end{itemize}

For the plot labels, we use $\Xmin$, $\Xmax$, and $Q$ to indicate if the spectral factor $\Sigma_H$ is constructed with the minimal solution of the KYP inequality, the maximal solution, or the matrix given from a direct pH modeling approach, respectively. The full-order model is denoted with FOM. The model reduction algorithms for positive real balanced truncation (\Cref{alg:prbt}) and our novel method (\Cref{alg:MORspectralFactor}) are denoted with \emph{prbt} and \emph{spectralFactor}, respectively.

To ensure reproducibility of the conducted experiments, the code for the numerical examples is publicly available under \texttt{doi} 10.5281/zenodo.4632901.

\subsection{Mass-spring-damper system}
Our first example is a multi-input multi-output mass-spring-damper system originally introduced in \cite{GugPBS12}, where the inputs describe external forces acting on the first two masses. The outputs are chosen as the corresponding velocities, thus rendering the system passive. In fact, the model is directly given in port-Hamiltonian form~\eqref{eqn:pH} with $P=0$, $S=N=0$. For details on the setup of the system matrices, we refer to \cite{GugPBS12}. To demonstrate our methods, we use a moderate system dimension of $n=1000$.  The numerically minimal pH realization obtained with the algorithm from \cite[Sec.~5]{BreMS20} yields a system of dimension $n=86$, with an $\mathcal{H}_2$ error of~\num{6.6588e-07} and an $\mathcal{H}_\infty$ error of~\num{1.8571e-06}. 

\begin{figure}
%
%
\begin{tikzpicture}

\begin{axis}[%
width=5in,
height=1.8in,
at={(1.011in,0.642in)},
scale only axis,
grid=both,
grid style={line width=.1pt, draw=gray!10},
major grid style={line width=.2pt,draw=gray!50},
axis lines*=left,
axis line style={line width=\lineWidth},
mark size=2.2pt,
xmin=1,
xmax=80,
ymode=log,
ymin=1e-9,
ymax=1,
yminorticks=true,
axis background/.style={fill=white},
legend style={%
	legend cell align=left, 
	align=left, 
	font=\tiny,
	draw=white!15!black,
	at={(0.01,0.02)},
	anchor=south west,},
]
\addplot [color=mycolor1,line width=\lineWidth,mark=triangle*,mark repeat={4}]
  table[row sep=crcr]{%
1	0.262658410927011\\
2	0.24676837395542\\
3	0.209182559038404\\
4	0.143757359180192\\
5	0.0538952054669837\\
6	0.026108111265548\\
7	0.0190207929977854\\
8	0.00931342822641115\\
9	0.00573421550964604\\
10	0.00377501293232087\\
11	0.00184056160764367\\
12	0.00171049433849583\\
13	0.000732400875694053\\
14	0.000477725528445029\\
15	0.000205513910596189\\
16	0.000151285468838474\\
17	6.49189814301029e-05\\
18	4.12031489864278e-05\\
19	1.54973771670323e-05\\
20	1.32433176215097e-05\\
21	4.54943170146516e-06\\
22	2.93038814753891e-06\\
23	1.74836376982732e-06\\
24	1.69153644655503e-06\\
25	1.66297736885675e-06\\
26	1.53663559889858e-06\\
27	1.46706031448132e-06\\
28	1.40105491750388e-06\\
29	1.29169753936329e-06\\
30	1.28218433906231e-06\\
31	1.15989498496129e-06\\
32	1.13592100045554e-06\\
33	1.03503742414653e-06\\
34	9.98827642380759e-07\\
35	9.17574900776297e-07\\
36	8.78938954517078e-07\\
37	8.11509848217717e-07\\
38	7.74633499264375e-07\\
39	7.17633074167926e-07\\
40	6.8441756742355e-07\\
41	6.35666729929502e-07\\
42	6.07002343224579e-07\\
43	5.65029591606709e-07\\
44	5.41254283632807e-07\\
45	5.05082565908598e-07\\
46	4.86081582472087e-07\\
47	4.55164712895714e-07\\
48	4.40334271174131e-07\\
49	4.14528071574376e-07\\
50	4.02845382655373e-07\\
51	3.82240241593326e-07\\
52	3.72584736188783e-07\\
53	3.57120809804561e-07\\
54	3.48662019869399e-07\\
55	3.3778645452146e-07\\
56	3.30109300351816e-07\\
57	3.22800092654408e-07\\
58	3.15675913793721e-07\\
59	3.10759674480793e-07\\
60	3.03778520624702e-07\\
61	3.00236380033199e-07\\
62	2.92589005771338e-07\\
63	2.89709759227804e-07\\
64	2.802164775156e-07\\
65	2.77604949273897e-07\\
66	2.64871528690753e-07\\
67	2.62357993423343e-07\\
68	2.44901017287436e-07\\
69	2.42433587603214e-07\\
70	2.18729160288406e-07\\
71	2.16305363939269e-07\\
72	1.84806256469738e-07\\
73	1.82450843037578e-07\\
74	1.41683063074132e-07\\
75	1.39465288731045e-07\\
76	1.10750728635128e-07\\
77	8.84939898762005e-08\\
78	8.66325432063652e-08\\
79	2.71431409237294e-08\\
80	2.60139111766211e-08\\
81	9.50331085064568e-09\\
82	4.98213788792087e-09\\
83	5.35067205320636e-10\\
84	5.03566454539682e-10\\
85	2.37392170213593e-10\\
86	4.9043162393526e-11\\
};
\addlegendentry{FOM}

\addplot [color=mycolor2,line width=\lineWidth,mark=square*,mark repeat={4}]
  table[row sep=crcr]{%
1	0.622485749254663\\
2	0.593917797451878\\
3	0.580591454724331\\
4	0.499940474050145\\
5	0.382574709481671\\
6	0.349041546448508\\
7	0.312584077307488\\
8	0.285054613333661\\
9	0.237262330430538\\
10	0.208504448477484\\
11	0.179298356138511\\
12	0.154597067550639\\
13	0.134837204841838\\
14	0.116229852835224\\
15	0.10150083992919\\
16	0.0881587584129547\\
17	0.0769903250743888\\
18	0.0682583167328069\\
19	0.0601710155006094\\
20	0.052974492227516\\
21	0.0468328347304987\\
22	0.0409647046418102\\
23	0.0356419603969933\\
24	0.0309851743473521\\
25	0.0268915290387614\\
26	0.0234224399931709\\
27	0.0202314372027215\\
28	0.0174135668054045\\
29	0.0150463517602438\\
30	0.0130109318625344\\
31	0.0113016959601459\\
32	0.00973868326345961\\
33	0.00834987830765905\\
34	0.00717343470596063\\
35	0.00618523913943937\\
36	0.00540134756002302\\
37	0.00468800621795635\\
38	0.0040180618142731\\
39	0.00341904315664821\\
40	0.00288864560524466\\
41	0.00244676417828863\\
42	0.00207009683514385\\
43	0.00178454323036697\\
44	0.00158862904514602\\
45	0.00139223943575268\\
46	0.00118915729822454\\
47	0.0010028553568882\\
48	0.000847791139322931\\
49	0.00071281466103715\\
50	0.000601857184802994\\
51	0.000506671603253895\\
52	0.000428932037294678\\
53	0.000365706431291032\\
54	0.000318320962159796\\
55	0.000281029401779505\\
56	0.000241617249759118\\
57	0.000205568408817995\\
58	0.000173693540706202\\
59	0.000146710469277197\\
60	0.00012412479471977\\
61	0.000104789623392516\\
62	8.92289491967703e-05\\
63	7.56920415949748e-05\\
64	6.53838229237788e-05\\
65	5.62726527332316e-05\\
66	4.87548169321921e-05\\
67	4.18300443618629e-05\\
68	3.54959781973769e-05\\
69	3.03289769180277e-05\\
70	2.54312819071367e-05\\
71	2.17561310772506e-05\\
72	1.81007992921896e-05\\
73	1.55202155158764e-05\\
74	1.28180666019869e-05\\
75	1.10222238718751e-05\\
76	9.02450513111276e-06\\
77	7.79600241666899e-06\\
78	6.30194131345902e-06\\
79	5.48833473121883e-06\\
80	4.33839676744679e-06\\
81	3.82671200408629e-06\\
82	2.90086322979378e-06\\
83	2.60117211731069e-06\\
84	1.81460827228544e-06\\
85	1.66019211574907e-06\\
86	1.36901233934319e-06\\
};
\addlegendentry{$\Sigma_H(Q)$}

\addplot [color=mycolor3,line width=\lineWidth,mark=*,mark repeat={4}]
  table[row sep=crcr]{%
1	0.906999346308722\\
2	0.906985877578904\\
3	0.903735868008308\\
4	0.903688695358915\\
5	0.89665766997968\\
6	0.896369465168959\\
7	0.883823497976038\\
8	0.883213361987064\\
9	0.865328038378719\\
10	0.864868130905418\\
11	0.84126395018243\\
12	0.840440772424267\\
13	0.813564546196852\\
14	0.811516862048184\\
15	0.783629321016457\\
16	0.783560535774649\\
17	0.775493026907214\\
18	0.775478500469093\\
19	0.771764907628305\\
20	0.771677014118225\\
21	0.766177186474301\\
22	0.765368595921433\\
23	0.75857693636517\\
24	0.757170110364851\\
25	0.754241041853005\\
26	0.751723730080974\\
27	0.748431241010241\\
28	0.745709062265399\\
29	0.735412096796004\\
30	0.734966520737886\\
31	0.72323501970323\\
32	0.722385928675969\\
33	0.722282359967371\\
34	0.718042344331448\\
35	0.708322899036233\\
36	0.704572476752583\\
37	0.694867393179296\\
38	0.693589553660709\\
39	0.68796924510009\\
40	0.687019743661886\\
41	0.676785843153119\\
42	0.669060320535855\\
43	0.666945316958664\\
44	0.66562119324669\\
45	0.654964419219927\\
46	0.652595010656001\\
47	0.643336703565868\\
48	0.640947577725554\\
49	0.640440023139311\\
50	0.632491798050672\\
51	0.62612706656701\\
52	0.618722001574195\\
53	0.618042511444757\\
54	0.615593605763574\\
55	0.608419182022092\\
56	0.607556316482067\\
57	0.601063372696055\\
58	0.600667593125837\\
59	0.596417425028283\\
60	0.593999248246778\\
61	0.590309576424819\\
62	0.589714761735426\\
63	0.588798089311165\\
64	0.586726882145084\\
65	0.586289811646713\\
66	0.569425238202234\\
67	0.542785477227041\\
68	0.523235120874508\\
69	0.490128989008463\\
70	0.476833912114081\\
71	0.432217690853525\\
72	0.42333776690693\\
73	0.389337501602807\\
74	0.350497539665007\\
75	0.345697929322875\\
76	0.309636648208629\\
77	0.276926245554952\\
78	0.271227505327923\\
79	0.232627578114028\\
80	0.223403164312975\\
81	0.191850495987186\\
82	0.16376939739611\\
83	0.144998004307407\\
84	0.0971433404739447\\
85	0.059677465186822\\
86	0.0320312494238117\\
};
\addlegendentry{$\Sigma_H(\Xmax)$}

\addplot [color=mycolor4,line width=\lineWidth,mark=diamond*,mark repeat={4}]
  table[row sep=crcr]{%
1	0.588058858468592\\
2	0.543409571883338\\
3	0.434126638698413\\
4	0.270199035267138\\
5	0.154698742899607\\
6	0.0901678411717945\\
7	0.0558681398611132\\
8	0.0509703429282382\\
9	0.022480806180874\\
10	0.0154047287887463\\
11	0.00682266752685747\\
12	0.00495871391548999\\
13	0.00231808427216914\\
14	0.00140144194989784\\
15	0.000602130532749227\\
16	0.000434621279595914\\
17	0.000181967107329234\\
18	0.000116559346454159\\
19	3.95459980382494e-05\\
20	3.79923392008105e-05\\
21	1.21945458318574e-05\\
22	7.74757851342047e-06\\
23	3.74599716090626e-06\\
24	3.40995795993037e-06\\
25	2.52293502441833e-06\\
26	2.32603268057825e-06\\
27	2.07231538897852e-06\\
28	1.94828769512135e-06\\
29	1.79588466354479e-06\\
30	1.70350543806203e-06\\
31	1.58782137771724e-06\\
32	1.5179790921538e-06\\
33	1.42003116629778e-06\\
34	1.36956874902316e-06\\
35	1.28034490693812e-06\\
36	1.24727649618118e-06\\
37	1.16179726107947e-06\\
38	1.14312347865169e-06\\
39	1.05974523760178e-06\\
40	1.05104418039342e-06\\
41	9.70847721585598e-07\\
42	9.67769646325323e-07\\
43	8.92597836883757e-07\\
44	8.92033995123253e-07\\
45	8.2311355385423e-07\\
46	8.23080698949967e-07\\
47	7.60845365819716e-07\\
48	7.60256069939342e-07\\
49	7.04850016279062e-07\\
50	7.02655181126654e-07\\
51	6.54526229584304e-07\\
52	6.49514575725319e-07\\
53	6.09945423586807e-07\\
54	6.00087592891206e-07\\
55	5.7113775345777e-07\\
56	5.53688143016095e-07\\
57	5.35071876310229e-07\\
58	5.09686594795073e-07\\
59	4.97869100608015e-07\\
60	4.67485999398121e-07\\
61	4.59514635610954e-07\\
62	4.26495032518715e-07\\
63	4.20858166877169e-07\\
64	3.86127073330711e-07\\
65	3.82044768732601e-07\\
66	3.64586202661925e-07\\
67	3.45535159389487e-07\\
68	3.42756029190708e-07\\
69	3.04294308970433e-07\\
70	3.02405185641843e-07\\
71	2.61283457494489e-07\\
72	2.60163858841856e-07\\
73	2.15336396614275e-07\\
74	2.14889996531147e-07\\
75	1.64971298067421e-07\\
76	1.64810854377589e-07\\
77	1.0806224172541e-07\\
78	1.07356703005577e-07\\
79	4.02446088617002e-08\\
80	3.90347917228112e-08\\
81	1.67858811238755e-08\\
82	1.52114905940233e-08\\
83	9.89442667925698e-10\\
84	8.94121080312384e-10\\
85	4.08545869832451e-10\\
86	1.15516557447964e-10\\
};
\addlegendentry{$\Sigma_H(\Xmin)$}

\end{axis}
\end{tikzpicture}%
	\caption{Hankel singular values for the full-order model (FOM) of the mass-spring-damper system and the auxiliary system $\Sigma_H$ for different solutions of the KYP-inequality}
	\label{fig:msd-n1000-HankelSingularValues}
\end{figure}
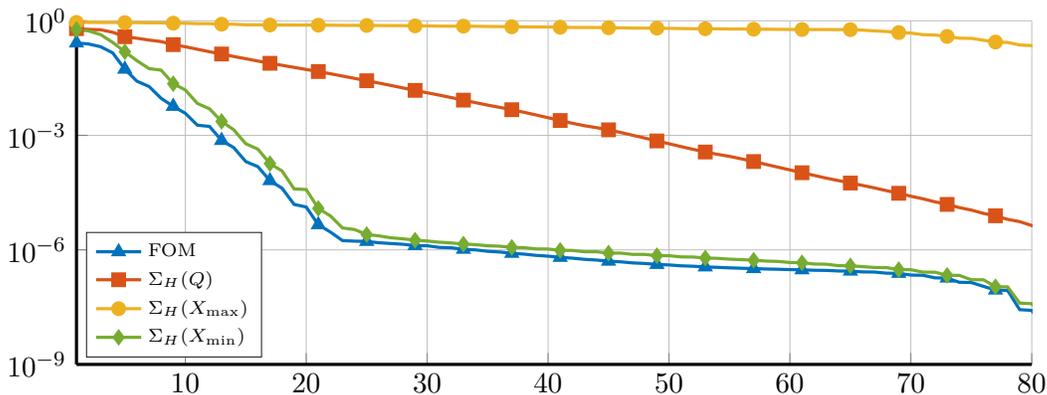

The Hankel singular values for the FOM and the spectral factors corresponding to different pH realizations are presented in Figure~\ref{fig:msd-n1000-HankelSingularValues}. We observe that the decay of the singular values corresponding to the spectral factor for $\Xmin$ (green diamonds) is quite similar to the decay of the FOM (blue triangles). In contrast, the singular values for the realization corresponding to the matrix $Q$ (red squares) coming directly from the model, and corresponding to $\Xmax$ (yellow circles) have a much slower decay, which is in agreement with \Cref{thm:HankelSingularValuesSpectralFactor}. In particular, we expect MOR methods working with these realizations to have a more significant approximation error. This is indeed the case, as we showcase in Figure~\ref{fig:msd-n1000-Errors}. In particular, the ROM constructed with our novel MOR method based on spectral factorization (cf.~\Cref{alg:MORspectralFactor}) corresponding to $\Xmin$ has a similar error to the $\mathcal{H}_2$ optimal approximation obtained with IRKA, while at the same time ensuring passivity of the ROM.  While our method performs consistently better than pH-IRKA, we want to emphasize that also pH-IRKA depends strongly on the specific realization. Indeed, for reduced dimension $r=16$, the pH-IRKA ROM corresponding to $\Xmin$ has an $\mathcal{H}_2$ error almost 4 magnitudes better than the pH-IRKA ROM corresponding to the original energy~$Q$. 

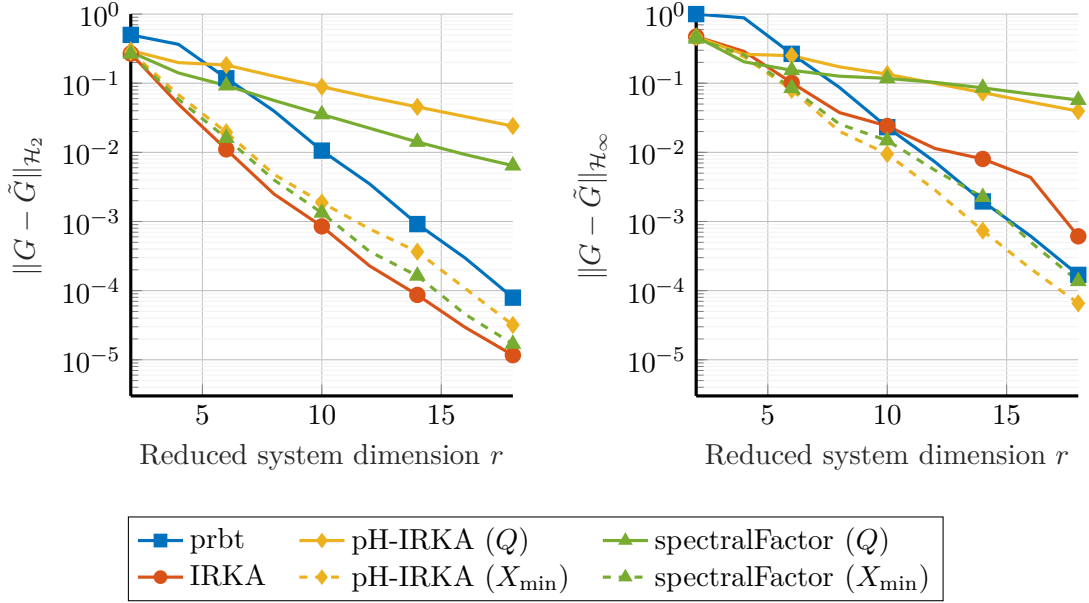
\begin{figure}[t]
	\centering
	\begin{subfigure}[t]{.45\linewidth}
%
\begin{tikzpicture}

\begin{axis}[%
width=2in,
height=2in,
at={(1.011in,0.642in)},
scale only axis,
grid=both,
grid style={line width=.1pt, draw=gray!10},
major grid style={line width=.2pt,draw=gray!50},
axis lines*=left,
axis line style={line width=\lineWidth},
xmin=2,
xmax=18,
xlabel style={font=\color{white!15!black}},
xlabel={Reduced system dimension $r$},
ymode=log,
ymin=3e-06,
ymax=1,
yminorticks=true,
ylabel style={font=\color{white!15!black}},
ylabel={$\|G-\tilde{G}\|_{\mathcal{H}_2}$},
axis background/.style={fill=white},
mark size=2.5pt,
legend style={
	at={(0.5,-0.1)},
	anchor=north,
	legend cell align={left},
	/tikz/column 2/.style={
                column sep=10pt,
            },
    /tikz/column 4/.style={
                column sep=10pt,
            },
},
legend columns=3,
legend to name=named, 
]
\addplot [color=mycolor1,line width=\lineWidth,mark=square*,mark repeat={2}]
  table[row sep=crcr]{%
2	0.500142075842316\\
4	0.365099847528314\\
6	0.118077069228029\\
8	0.0395443244870562\\
10	0.0106013687430278\\
12	0.00350047111165301\\
14	0.00091763345874702\\
16	0.000296623737492816\\
18	7.9164701370152e-05\\
20	2.38300449227502e-05\\
};
\addlegendentry{prbt}

\addplot [color=mycolor3,line width=\lineWidth,mark=diamond*,mark repeat={2}]
  table[row sep=crcr]{%
2	0.2974177609813\\
4	0.198643596181537\\
6	0.183112248478162\\
8	0.126402841439287\\
10	0.0890206462006391\\
12	0.0628049026652457\\
14	0.0454569935419281\\
16	0.0330075978384858\\
18	0.023992226356763\\
20	0.0167870670303922\\
};
\addlegendentry{pH-IRKA ($Q$)}

\addplot [color=mycolor4,line width=\lineWidth,mark=triangle*,mark repeat={2}]
  table[row sep=crcr]{%
2	0.282049095162976\\
4	0.140741686254106\\
6	0.0924985619278518\\
8	0.05628512292357\\
10	0.0354117403639371\\
12	0.0223430694038237\\
14	0.0141360933855319\\
16	0.00930515575937626\\
18	0.00643078563712058\\
20	0.00452277379185621\\
};
\addlegendentry{spectralFactor ($Q$)}

\addplot [color=mycolor2,line width=\lineWidth,mark=*,mark repeat={2}]
  table[row sep=crcr]{%
2	0.268099760418536\\
4	0.0496202070172605\\
6	0.0110460475130039\\
8	0.00249165713888\\
10	0.000850235843846243\\
12	0.000228104618743837\\
14	8.66576349696459e-05\\
16	2.94266692091679e-05\\
18	1.16489020109264e-05\\
20	2.86234259082885e-06\\
};
\addlegendentry{IRKA}

\addplot [color=mycolor3, dashed,line width=\lineWidth,mark=diamond*,mark repeat={2},mark options={solid}]
  table[row sep=crcr]{%
2	0.283551647113907\\
4	0.0679575701437827\\
6	0.0194828347867743\\
8	0.00479921702069979\\
10	0.00187667263460867\\
12	0.000780920940573837\\
14	0.000365462153284192\\
16	0.000107756802626085\\
18	3.1836033811607e-05\\
20	8.58976290486202e-06\\
};
\addlegendentry{pH-IRKA ($\Xmin$)}

\addplot [color=mycolor4, dashed,line width=\lineWidth,mark=triangle*,mark repeat={2},mark options={solid}]
  table[row sep=crcr]{%
2	0.266212568435337\\
4	0.0583894564043145\\
6	0.0160718122627577\\
8	0.00398921678756003\\
10	0.00133583400297092\\
12	0.000368284809895793\\
14	0.000163755065721828\\
16	4.55380432339032e-05\\
18	1.69213587668303e-05\\
20	4.11321756664272e-06\\
};
\addlegendentry{spectralFactor ($\Xmin$)}
\end{axis}
\end{tikzpicture}
	\end{subfigure}\qquad
	\begin{subfigure}[t]{.45\linewidth}
%
\begin{tikzpicture}

\begin{axis}[%
width=2in,
height=2in,
at={(1.011in,0.642in)},
scale only axis,
grid=both,
grid style={line width=.1pt, draw=gray!10},
major grid style={line width=.2pt,draw=gray!50},
axis lines*=left,
axis line style={line width=\lineWidth},
xmin=2,
xmax=18,
xlabel style={font=\color{white!15!black}},
xlabel={Reduced system dimension $r$},
ymode=log,
ymin=3e-06,
ymax=1,
yminorticks=true,
ylabel style={font=\color{white!15!black}},
ylabel={$\|G-\tilde{G}\|_{\mathcal{H}_{\infty}}$},
axis background/.style={fill=white},
mark size=2.5pt,
legend style={legend cell align=left, align=left, draw=white!15!black}
]
\addplot [color=mycolor1,line width=\lineWidth,mark=square*,mark repeat={2}]
  table[row sep=crcr]{%
2	0.999999999986881\\
4	0.883010568527452\\
6	0.266207805627651\\
8	0.0869556857003491\\
10	0.0231153009033602\\
12	0.00727284377404146\\
14	0.00195228169554109\\
16	0.000616816700911946\\
18	0.000168992084500852\\
20	5.05716782435094e-05\\
};

\addplot [color=mycolor2,line width=\lineWidth,mark=*,mark repeat={2}]
  table[row sep=crcr]{%
2	0.473126834201283\\
4	0.288669352707014\\
6	0.101545464312111\\
8	0.037622320943323\\
10	0.024071329113302\\
12	0.0113679905494557\\
14	0.00802903357226395\\
16	0.00435301535925433\\
18	0.000610150874766629\\
20	8.93145040118393e-05\\
};

\addplot [color=mycolor3,line width=\lineWidth,mark=diamond*,mark repeat={2}]
  table[row sep=crcr]{%
2	0.468952382126651\\
4	0.262267389534601\\
6	0.250264651818297\\
8	0.172602591391689\\
10	0.135156912690854\\
12	0.0995374461481257\\
14	0.0732017149292287\\
16	0.0534210176239147\\
18	0.0394910741747845\\
20	0.0269757741167765\\
};

\addplot [color=mycolor3, dashed,line width=\lineWidth,mark=diamond*,mark repeat={2},mark options={solid}]
  table[row sep=crcr]{%
2	0.466395805564058\\
4	0.240824304302034\\
6	0.0802276077085316\\
8	0.0201740140497714\\
10	0.00944495912615902\\
12	0.00284964508260534\\
14	0.000738464444750734\\
16	0.00020828624326545\\
18	6.55506335030448e-05\\
20	1.72692844127286e-05\\
};

\addplot [color=mycolor4,line width=\lineWidth,mark=triangle*,mark repeat={2}]
  table[row sep=crcr]{%
2	0.469295302758289\\
4	0.204216723419618\\
6	0.15401023268703\\
8	0.126366046514569\\
10	0.117512565087252\\
12	0.102901828977595\\
14	0.0856610422996424\\
16	0.0696257511646717\\
18	0.0570700959741421\\
20	0.0452478104992506\\
};

\addplot [color=mycolor4, dashed,line width=\lineWidth,mark=triangle*,mark repeat={2},mark options={solid}]
  table[row sep=crcr]{%
2	0.440229421192609\\
4	0.256512888186844\\
6	0.0850362783222356\\
8	0.0256071667312906\\
10	0.0148305365630362\\
12	0.00551383438600766\\
14	0.00224884274123628\\
16	0.000505452618137634\\
18	0.000136698489381655\\
20	3.02641104785606e-05\\
};

\end{axis}
\end{tikzpicture}%
	\end{subfigure}\\
	\hypersetup{linkcolor=black}
	\ref{named} 
	\caption{Error for the mass-spring-damper system. Left: $\mathcal{H}_\infty$ error. Right: $\mathcal{H}_2$ error.}
	\label{fig:msd-n1000-Errors}
\end{figure}

We conclude our example with a quick investigation of the a-posteriori error bound presented in \Cref{thm:H2errorSpectralFactor}.   The errors $\|G-\tilde{G}\|$ for the system and $\|H-\tilde{H}\|$ for the spectral factors, are presented in \Cref{tab:msd-n1000-ErrorsSpectralFactors}. 
%
%
\begin{table}
	\centering
	\caption{Approximation errors for the system ($G$ vs.~$\tilde{G}$) and the spectral factors ($H$ vs.~$\tilde{H}$) for the mass-spring-damper system}
	\label{tab:msd-n1000-ErrorsSpectralFactors}
	\begin{tabular}{c@{\hspace{1.5em}}cc@{\hspace{1.5em}}cc}
		\toprule
		& \multicolumn{2}{c}{spectralFactor $(Q)$} &  \multicolumn{2}{c}{spectralFactor $(\Xmin)$}\\\cmidrule(r{1.5em}){2-3}\cmidrule{4-5}
		$r$ & $\|G-\tilde{G}\|_{\mathcal{H}_2}$ & $\|H-\tilde{H}\|_{\mathcal{H}_2}$ & $\|G-\tilde{G}\|_{\mathcal{H}_2}$ & $\|H-\tilde{H}\|_{\mathcal{H}_2}$\\\midrule
$4$& \num{1.407e-01}& \num{3.892e-01}& \num{5.839e-02}& \num{9.943e-02}\\
$8$& \num{5.629e-02}& \num{2.180e-01}& \num{3.989e-03}& \num{7.258e-03}\\
$12$& \num{2.234e-02}& \num{1.164e-01}& \num{3.683e-04}& \num{8.248e-04}\\
$16$& \num{9.305e-03}& \num{6.176e-02}& \num{4.554e-05}& \num{1.007e-04}\\
\bottomrule
	\end{tabular}
\end{table}
We notice that the behavior of the error of the system approximation, i.e., $\|G-\tilde{G}\|_{\mathcal{H}_2}$, is similar to the behavior of the approximation quality for the spectral factor, i.e., $\|H-\tilde{H}\|_{\mathcal{H}_2}$.  For the realization based on $\Xmin$, the error in the spectral factor $\|H-\tilde{H}\|_{\mathcal{H}_2}$ is in close agreement to the error in the system. For the realization based on the original $Q$, this difference is bigger.

\begin{remark}
	In our numerical experiment, we observed that the norm of the correction term reported in~\eqref{eqn:correctionTerm}, i.e., the norm of the correction system
	\begin{displaymath}
		\widehat{G}(s) = \widehat{C}(sI_r-\tilde{A})^{-1}\tilde{B}
	\end{displaymath}
	has a similar decay as the error system $G-\tilde{G}$.  Consequently, for the spectral factor for $\Xmin$, the ROM is close to a ROM obtained via projection.
\end{remark}


\subsection{Passive linear poroelasticity}
For our second example, we consider Biot's consolidation model for poroelasticity \cite{Bio41}, which describes the deformation of a porous material fully saturated by a viscous fluid. For a bounded Lipschitz domain $\Omega\subseteq\R^d$ with $d\in\{2,3\}$ and a time interval $\mathbb{T} \vcentcolon= [0,T]$, one wants to determine the displacement field $u\colon\mathbb{T}\times\Omega\to\R^d$ for the porous material and the pressure $p\colon\mathbb{T}\times\Omega\to\R$ for the viscous fluid satisfying the coupled hyperbolic-parabolic PDE
\begin{subequations}
	\label{eqn:poro}
	\begin{align}
		\label{eqn:poro:u}\rho\partial_{tt}u -\nabla  \sigma(u) + \nabla (\alpha p) &= f,\\
		\label{eqn:poro:p}\partial_t\big( \alpha\nabla \cdot u + \tfrac{1}{M} p\big) - \nabla \cdot\big(\tfrac{\kappa}{\nu}\nabla p\big) &= g,
	\end{align}
\end{subequations}
with stress-strain constitute relation
\begin{displaymath}
	\sigma(u) = 2\mu\varepsilon(u) + \lambda(\nabla\cdot u)\mathcal{I},\quad \varepsilon(u) = \tfrac{1}{2}\big(\nabla u + (\nabla u)^\top).
\end{displaymath}
Hereby,  $\mu$ and $\lambda$ are the Lam\'{e} coefficients, and $\mathcal{I}$ is the identity tensor. The quantities $\alpha$, $M$, $\kappa$, $\rho$, $\nu$, $f$ and~$g$ denote the Biot-Willis fluid-solid coupling coefficient, Biot modulus, permeability, density, fluid viscosity, volume-distributed forces, and external injection, respectively. After a first-order reformulation, a finite-element discretization with standard $P_1$ Lagrange finite element spaces for the associated weak formulation (which we perform with the python interface fenics), and using the generalized state-space pH formulation (cf.~Remark\ref{rem:pHdescriptor}) derived in \cite{AltMU20c}, we obtain the linear system of equations
\begin{displaymath}
	E\dot{x} = (J-R)x + Bv,\qquad y= B^\top x
\end{displaymath}
with 
\begin{gather*}
	E \vcentcolon= \begin{bmatrix}
		\rho M_u & 0 & 0\\
		0 & K_u(\mu,\lambda) & 0\\
		0 & 0 & \frac{1}{M} M_p
	\end{bmatrix},  \quad
	J \vcentcolon= \begin{bmatrix}
		0 & -K_u(\mu,\lambda) & \alpha D^\top\\
		K_u(\mu,\lambda) & 0 & 0\\
		-\alpha D & 0 & 0
	\end{bmatrix},\\	 
	R \vcentcolon= \begin{bmatrix}
		0 & 0 & 0\\
		0 & 0 & 0\\
		0 & 0 & \tfrac{\kappa}{\nu} K_p
	\end{bmatrix},  \quad 
	B \vcentcolon= \begin{bmatrix}
		B_f & 0\\
		0 & 0\\
		0 & B_g
	\end{bmatrix}, \quad 
	x \vcentcolon=\begin{bmatrix}
		w_h\\
		u_h\\
		p_h
	\end{bmatrix}.
\end{gather*}
Let us emphasize that $E$ is a symmetric positive definite mass matrix. Due to the hyperbolic character of~\eqref{eqn:poro:u}, the system has eigenvalues on the imaginary axis, which is difficult for MOR.  Although MOR methods for hyperbolic systems or systems with slowly decaying Hankel singular values are subject to extensive research (for an overview we refer to \cite[Sec.~2]{BlaSU20}), we avoid this issue by adding artificial damping, resulting in the model
\begin{displaymath}
	E\dot{x} = (J-R-\eta I_n)x + Bv,\qquad y= B^\top x.
\end{displaymath}
For our experiment we use the unit square $\Omega = [0,1]^2$ with $d=2$, homogeneous Dirichlet boundary conditions,  spatially independent volume-distributed forces $f$ and injection $g$ (yielding $m=2$), and the parameters listed in \Cref{tab:poro:parameterSetting}.
\begin{table}
	\centering
	\caption{Numerical parameters for the poroelastic system}
	\label{tab:poro:parameterSetting}
	\begin{tabular}{ccccccc}
		\toprule
		$\mu$ & $\lambda$ & $\rho$ & $\alpha$ & $\tfrac{1}{M}$ & $\tfrac{\kappa}{\nu}$ & $\eta$\\\midrule
		\num{12} & \num{6} & \num{1e-3} & \num{0.79} & \num{7.80e3} & \num{633.33} & \num{1e-3}\\\bottomrule
	\end{tabular}
\end{table}
The resulting system has dimension $n=980$. To convert the system to the standard pH representation, as introduced in \eqref{eqn:pH}, we perform the coordinate transformation $z = Ex$, resulting in $Q = E^{-1}$. We emphasize that in practice, one should directly work with the generalized state-space representation (cf.~\Cref{rem:pHdescriptor}). However, for the sake of consistency, we proceed with the representation~\eqref{eqn:pH}. After computing a structure-preserving minimal realization we obtain a system of dimension $n=83$ with a relative $\mathcal{H}_2$ error of \num{5.4580e-05} and a relative $\mathcal{H}_\infty$ error of \num{3.8634e-05}.

The decay of the Hankel singular values for the FOM and the spectral factor $\Sigma_H$ for different solutions of the KYP-inequality is presented in Figure~\ref{fig:poro-n980-HankelSingularValues} for different values of the artificial damping parameter $\eta$. 
\begin{figure}
	\sisetup{retain-unity-mantissa = false}
	\input{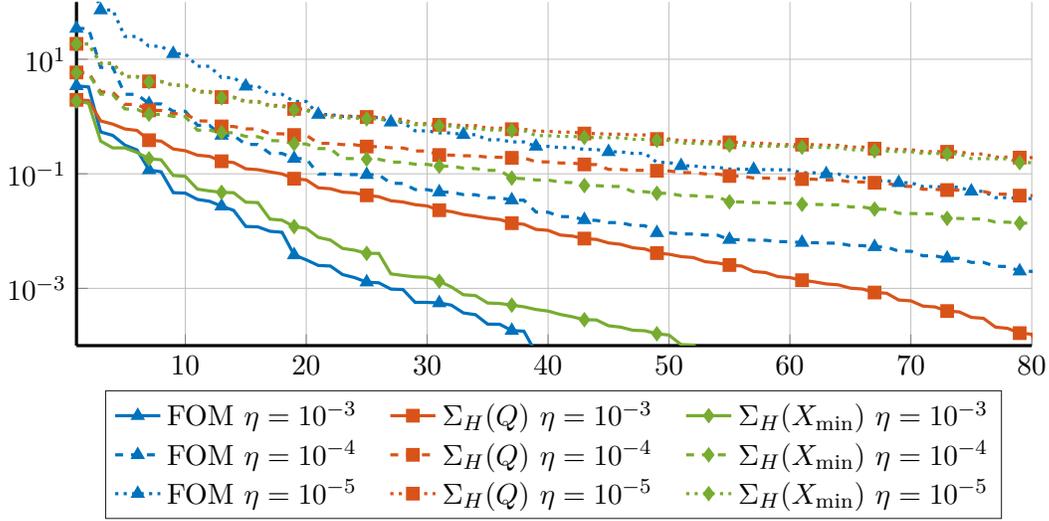}
	\caption{Hankel singular values for the full-order model (FOM) for the poroelasticity system and the auxiliary system $\Sigma_H$ for different solutions of the KYP-inequality and different artificial damping parameters $\eta$}
	\label{fig:poro-n980-HankelSingularValues}
\end{figure}
While \Cref{thm:HankelSingularValuesSpectralFactor} is true independent of the particular choice of the artificial damping parameter $\eta=\num{1e-3}$ (solid), $\eta=\num{1e-4}$ (dashed), $\eta=\num{1e-5}$ (dotted), for larger damping parameter we observe a stronger decay of the Hankel singular values and notice, that for a larger damping parameter also the difference between the Hankel singular values for $Q$ and $\Xmin$ increases. 

The $\mathcal{H}_2$ error between the FOM and the different model reduction schemes is presented in Figure~\ref{fig:poro-n980-H2error}.
\begin{figure}
	\centering
%
%
\begin{tikzpicture}

\begin{axis}[%
width=3.3in,
height=2in,
at={(1.011in,0.642in)},
scale only axis,
grid=both,
grid style={line width=.1pt, draw=gray!10},
major grid style={line width=.2pt,draw=gray!50},
axis lines*=left,
axis line style={line width=\lineWidth},
xmin=2,
xmax=18,
xlabel style={font=\color{white!15!black}},
xlabel={\small Reduced system dimension $r$},
ymode=log,
ymin=1e-01,
ymax=2e2,
yminorticks=true,
ylabel style={font=\color{white!15!black}},
ylabel={$\|G-\tilde{G}\|_{\mathcal{H}_2}$},
axis background/.style={fill=white},
mark size=2.2pt,
legend style={legend cell align=left, align=left, draw=white!15!black},
legend pos=outer north east,
]
\addplot [color=mycolor1,line width=\lineWidth,mark=square*,mark repeat={1}]
  table[row sep=crcr]{%
2	54.0741439014348\\
4	15.4253190847486\\
6	12.4137452723373\\
8	13.4737971958988\\
10	15.2876224638548\\
12	3.47378461765405\\
14	1.78013354317897\\
16	1.62097739838108\\
18	0.93389306981479\\
20	0.432864680544499\\
};
\addlegendentry{prbt}

\addplot [color=mycolor2,line width=\lineWidth,mark=*,mark repeat={1}]
  table[row sep=crcr]{%
2	13.3230420273002\\
4	8.37838083761204\\
6	2.71949407809688\\
8	1.25216973095691\\
10	0.882334456208134\\
12	0.810446704423751\\
14	0.590563610382663\\
16	0.246960627111951\\
18	0.236693019458051\\
20	0.12005208133869\\
};
\addlegendentry{IRKA}

\addplot [color=mycolor3,line width=\lineWidth,mark=diamond*,mark repeat={1},mark options={solid}]
  table[row sep=crcr]{%
2	19.8421343335966\\
4	16.6517295225129\\
6	5.34413587378438\\
8	3.9101266045848\\
10	2.43844272290461\\
12	2.05894922489081\\
14	1.25827974777933\\
16	1.23402303131055\\
18	0.588734041079704\\
20	0.354533910218014\\
};
\addlegendentry{pHIRKA ($Q$)}

\addplot [color=mycolor3, dashed,line width=\lineWidth,mark=diamond*,mark repeat={1},mark options={solid}]
  table[row sep=crcr]{%
2	17.5739810465858\\
4	13.1334993919912\\
6	12.1124054595305\\
8	1.27866755398429\\
10	0.904417661219606\\
12	0.830662794905894\\
14	0.745281790898985\\
16	0.703222788458966\\
18	0.287185946832489\\
20	0.305113973870203\\
};
\addlegendentry{pH-IRKA ($\Xmin$)}

\addplot [color=mycolor4,line width=\lineWidth,mark=triangle*,mark repeat={1},mark options={solid}]
  table[row sep=crcr]{%
2	16.9784670971937\\
4	10.4707850325043\\
6	4.2062485114971\\
8	2.60786261227396\\
10	2.33416929737013\\
12	2.29416129765307\\
14	1.30501105877078\\
16	1.30949424287513\\
18	0.719306693895128\\
20	0.667951543029351\\
};
\addlegendentry{spectralFactor ($Q$)}

\addplot [color=mycolor4, dashed,line width=\lineWidth,mark=triangle*,mark repeat={1},mark options={solid}]
  table[row sep=crcr]{%
2	18.883093478458\\
4	8.80096783532217\\
6	8.7428765311436\\
8	1.2819657344551\\
10	1.00977579559012\\
12	0.819958480059919\\
14	0.736666168306758\\
16	0.357651638421531\\
18	0.282757743855276\\
20	0.280125817520025\\
};
\addlegendentry{spectralFactor ($\Xmin$)}

\end{axis}
\end{tikzpicture}%
	\caption{$\mathcal{H}_2$ error for the poroelasticity model with artificial damping parameter $\eta=\num{1e-03}$}
	\label{fig:poro-n980-H2error}
\end{figure}
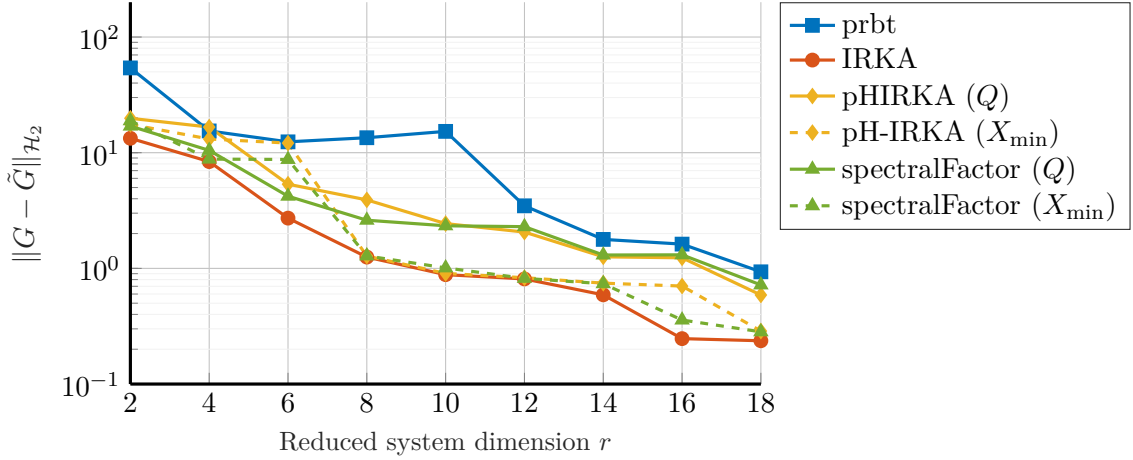
%
%
We notice that for $r=6$, pH-IRKA and our method produce slightly better approximations if using the original pH formulation instead of the pH realization corresponding to $\Xmin$.  Comparing the approximation quality for the spectral factors $\Sigma_H$, depicted in Figure~\ref{fig:poro-n980-spectralFactor-H2Error}, we observe the expected smaller error for the spectral factor corresponding to $\Xmin$. We emphasize that it is not clear how the different numerical errors (structure-preserving minimal realization, solution of the ARE including the artificial feedthrough term, Cholesky-like factorization for potentially indefinite matrices, solution of the Lyapunov equation) contribute to the overall approximation error. A detailed analysis is subject to further research.
%
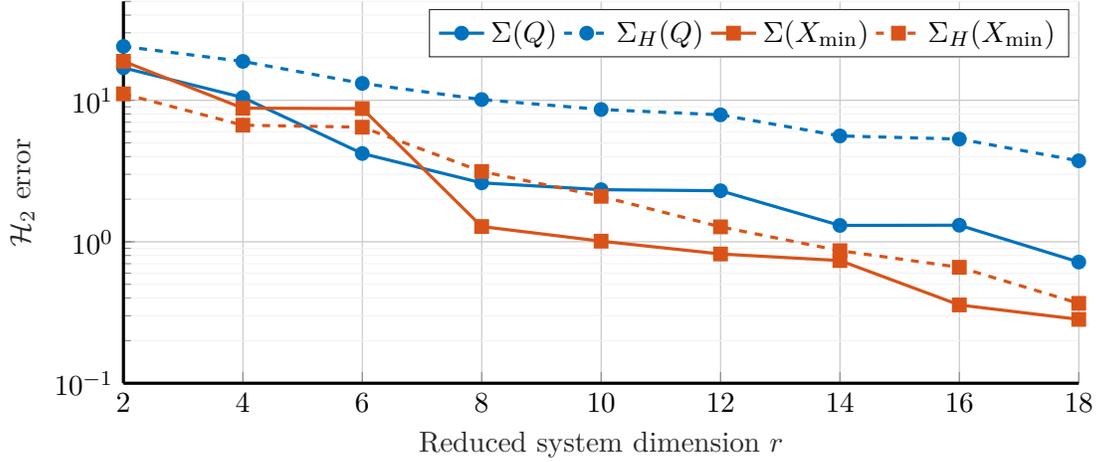
\begin{figure}
	\centering
%
%
\begin{tikzpicture}

\begin{axis}[%
width=5in,
height=2in,
at={(1.011in,0.642in)},
scale only axis,
grid=both,
grid style={line width=.1pt, draw=gray!10},
major grid style={line width=.2pt,draw=gray!50},
axis lines*=left,
axis line style={line width=\lineWidth},
xmin=2,
xmax=18,
xlabel style={font=\color{white!15!black}},
xlabel={Reduced system dimension $r$},
ymode=log,
ymin=1e-01,
ymax=5e1,
yminorticks=true,
ylabel style={font=\color{white!15!black}},
ylabel={ $\mathcal{H}_2$ error},
axis background/.style={fill=white},
mark size=2.2pt,
legend style={%
	legend cell align=left, 
	align=left, 
	draw=white!15!black,
	at={(0.99,0.98)},
	anchor=north east,
},
legend columns=-1,
]
\addplot [color=mycolor1,line width=\lineWidth,mark=*,mark repeat={1}]
  table[row sep=crcr]{%
2	16.9784670971937\\
4	10.4707850325043\\
6	4.2062485114971\\
8	2.60786261227396\\
10	2.33416929737013\\
12	2.29416129765307\\
14	1.30501105877078\\
16	1.30949424287513\\
18	0.719306693895128\\
20	0.667951543029351\\
};
\addlegendentry{$\Sigma(Q)$}

\addplot [color=mycolor1, dashed,line width=\lineWidth,mark=*,mark repeat={1},mark options={solid}]
  table[row sep=crcr]{%
2	24.0432015473772\\
4	18.8468534414091\\
6	13.1738373518466\\
8	10.1226900768408\\
10	8.61383549713323\\
12	7.90016820979057\\
14	5.5995696591065\\
16	5.32262375928714\\
18	3.73567804652807\\
20	3.05834682010553\\
};
\addlegendentry{$\Sigma_H(Q)$}

\addplot [color=mycolor2,line width=\lineWidth,mark=square*,mark repeat={1},mark options={solid}]
  table[row sep=crcr]{%
2	18.883093478458\\
4	8.80096783532217\\
6	8.7428765311436\\
8	1.2819657344551\\
10	1.00977579559012\\
12	0.819958480059919\\
14	0.736666168306758\\
16	0.357651638421531\\
18	0.282757743855276\\
20	0.280125817520025\\
};
\addlegendentry{$\Sigma(\Xmin)$}

\addplot [color=mycolor2, dashed,line width=\lineWidth,mark=square*,mark repeat={1},mark options={solid}]
  table[row sep=crcr]{%
2	11.1117716908951\\
4	6.67006455799772\\
6	6.4590999509257\\
8	3.13895999272374\\
10	2.09552172986841\\
12	1.27491193252792\\
14	0.865129878293167\\
16	0.660489240918842\\
18	0.367105164588867\\
20	0.257205004529665\\
};
\addlegendentry{$\Sigma_H(\Xmin)$}

\end{axis}
\end{tikzpicture}%
	\caption{Comparison of the $\mathcal{H}_2$ error for the system $\Sigma$ and its spectral factor~$\Sigma_H$}
	\label{fig:poro-n980-spectralFactor-H2Error}
\end{figure}

\section{Summary}

We have presented a novel structure-preserving model reduction method, namely \Cref{alg:MORspectralFactor}, that retains passivity or, equivalently, a port-Hamiltonian structure, within the reduced-order model. Our algorithm exploits the connection of passivity to the spectral factorization of the Popov function via the KYP inequality. We have shown (cf.~\Cref{thm:HankelSingularValuesSpectralFactor}) that the minimal solution of the KYP inequality, respectively, the associated Lur'e and algebraic Riccati equation, is preferable from a model reduction perspective. This not only applies to our methodology but translates to the structure-preserving variant of the iterative rational Krylov algorithm (IRKA), called pH-IRKA. While the corresponding pH realization is in general not sparse, the spectral factor retains the sparsity pattern of the original system independent of the particular solution of the KYP inequality, allowing for MOR in a large-scale context. Moreover, since we can use any stability-preserving MOR method for the spectral factor, existing MOR methods can be used to construct the passive ROM. The numerical examples demonstrate that our algorithm can produce accurate low-dimensional passive surrogate models whose $\mathcal{H}_2$ error is close to $\mathcal{H}_2$-optimal methods (which do not guarantee passivity).

As future work, we plan to investigate the impact of the different numerical approximation errors that impact our ROM construction. Moreover, an extension to passive descriptor systems, respectively, port-Hamiltonian differential-algebraic systems, is interesting.

\subsection*{Acknowledgments} T. Breiten acknowledges funding from the DFG within the
Sonderforschungsbereich/Transregio 154 ``Mathematical Modelling, Simulation and Optimization
using the Example of Gas Networks''.
 B. Unger acknowledges funding from the DFG under Germany's Excellence Strategy -- EXC 2075 -- 390740016 and is thankful for support by the Stuttgart Center for Simulation Science (SimTech).

\bibliographystyle{siam}
\bibliography{ph_mor}

\end{document}